\newtheorem{lemma}{Lemma}
\newtheorem{theorem}{Theorem}
\newenvironment{customthm}[1]
  {\innercustomthm}
  {\endinnercustomthm}
\newtheorem{proposition}{Proposition}
\theoremstyle{definition}
\newtheorem{remark}{Remark}
\newtheorem{example}{Example}
\newcommand{\Wnv}{W^a}
\newcommand{\R}{{\mathbb R}}
\newcommand{\N}{{\mathbb N}}
\newcommand{\LL}{\mathcal L}
\newcommand{\Ha}{\mathcal H}
\newcommand{\eps}{\varepsilon}
\newcommand{\Gauss}{\mathcal{E}_G}
\def\Xint#1{\mathchoice
       {\XXint\displaystyle\textstyle{#1}}%
       {\XXint\textstyle\scriptstyle{#1}}%
       {\XXint\scriptstyle\scriptscriptstyle{#1}}%
       {\XXint\scriptscriptstyle\scriptscriptstyle{#1}}%
       \!\int}
    \def\XXint#1#2#3{{\setbox0=\hbox{$#1{#2#3}{\int}$}
         \vcenter{\hbox{$#2#3$}}\kern-.5\wd0}}
\def\fint{\Xint-}
\DeclareMathOperator{\aplim}{ap\,lim}
\DeclareMathOperator{\capa}{Cap}
\DeclareMathOperator{\id}{Id}
\DeclareMathOperator{\loc}{loc}
\DeclareMathOperator{\trace}{tr}
\DeclareMathOperator{\graph}{\Gamma}
\DeclareMathOperator{\dive}{div}
\newcommand{\mrestr}{\ \rule{.4pt}{7pt}\rule{6pt}{.4pt}\ }
\newcommand{\MDir}{\mathcal{M}}
\newcommand{\MNav}{\widehat{\mathcal{M}}}
\begin{document}

\title{Minimising a relaxed Willmore functional for graphs subject to  boundary conditions}

\author{Klaus Deckelnick\thanks{e-mail:  Klaus.Deckelnick@ovgu.de}\\
Fakult\"at f\"ur Mathematik\\ Otto-von-Guericke-Universit\"at\\
Postfach 4120\\ D-39016 Magdeburg
\and
Hans-Christoph Grunau\thanks{e-mail: Hans-Christoph.Grunau@ovgu.de
}\\ Fakult\"at f\"ur Mathematik\\ Otto-von-Guericke-Universit\"at\\
Postfach 4120\\ D-39016 Magdeburg
\and 
Matthias R\"oger\thanks{e-mail:  matthias.roeger@tu-dortmund.de}\\
Fakult\"at f\"ur Mathematik\\
Technische Universit\"at Dortmund\\
Vogelpothsweg 87\\
D-44227 Dortmund
}
\maketitle

\begin{abstract} 
For a bounded  smooth domain in the plane and smooth boundary data we consider the minimisation 
of the Willmore functional for graphs subject to Dirichlet or Navier boundary conditions. 
For $H^2$-regular graphs we show that bounds for the Willmore energy imply area and diameter bounds. 
We then consider the $L^1$-lower semicontinuous relaxation of the Willmore functional, 
which is shown to be indeed its largest possible extension,
and characterise properties  of functions with finite relaxed energy. 
In particular, we deduce compactness and lower-bound estimates for energy-bounded sequences. 
The lower bound is given by a functional that describes the contribution by the regular part of the graph and is defined for a suitable subset of $BV(\Omega)$. 
We further show that finite relaxed Willmore energy implies the attainment of the Dirichlet boundary data in an appropriate sense, and obtain the 
existence of a minimiser in $L^\infty\cap BV$ for the relaxed energy. Finally, we extend our results to Navier boundary conditions and 
more general curvature energies of Canham--Helfrich type.

\medskip
\noindent 
MSC (2010): 49Q10, 53C42.
\end{abstract}

\section{Introduction and main results}
\label{sec:intro}

The present paper is intended as an analogue for the Willmore functional of the BV-approach
of minimising the non-parametric area functional under Dirichlet boundary conditions  (see \cite[Theorem 14.5]{Giusti}). 
We therefore consider for two-dimensional graphs $\Gamma\subset\R^3$ the following combination of Willmore 
functional (cf. \cite{Willmore})\footnote{This functional had indeed shown up already at the
beginning of the 19th century. For historical and mathematical background information  on the Willmore functional
 one may see \cite{MarquesNeves,Nitsche}.} and total Gau\ss\ curvature
\begin{align*}
	\mathcal W_\gamma(\Gamma) \,=\, \frac{1}{4} \int_{\Gamma} H^2 \,dS	- \gamma \int_{\Gamma} K\,dS,
\end{align*}
where $\gamma\in \R$ is a constant\footnote{For simplicity we call 
$W_\gamma$ Willmore functional also for $\gamma\neq 0$.},
$H$ and $K$ denote the mean and Gau\ss\ curvature and are defined as the sum respectively the product of the principal 
curvatures.
We investigate how and to what extent a direct method of the calculus of variations can be applied to the respective minimisation problem, 
subject to boundary conditions. We therefore need to identify a suitable class of functions and a suitable generalisation of the Willmore energy
 that allow for compactness and lower semicontinuity properties.

\subsection{The Willmore functional and boundary value problems for the Willmore equation}
\label{sec:intro_1}

Let $\Omega \subset \mathbb{R}^2$ be a bounded domain with a $C^2$--boundary
and exterior unit normal field $\nu$, let $\varphi:\overline{\Omega}\to \mathbb{R}$ be a sufficiently smooth (at least $\varphi\in C^2(\overline{\Omega}) $) 
boundary datum, and fix a parameter $\gamma\in \mathbb{R}$.
Our aim is to minimise the Willmore functional $\mathcal W_\gamma$ in the class of graphs 
$$
\graph(u)= \lbrace (x,u(x)) \, | \, x \in \Omega \rbrace
$$
of suitable functions
$
u:\overline{\Omega}\to \mathbb{R},
$
and subject to a boundary condition. We therefore consider 
\begin{align}
	W_\gamma(u) :=\mathcal W_\gamma(\Gamma(u)) := \frac{1}{4} \int_{\Omega} H^2 \; \sqrt{1+ | \nabla u |^2} \, dx 
	- \gamma \int_{\Omega} K\; \sqrt{1+ | \nabla u |^2}\, dx \label{eq:def-Wgamma}
\end{align}
either in the classes
\begin{equation}\label{eq:def_class_dirichlet}
 \{u:\overline{\Omega}\to \mathbb{R} :\, 
u=\varphi, \quad \frac{\partial u}{\partial \nu} = \frac{\partial \varphi}{\partial \nu} \mbox{ on } \partial \Omega \}
\end{equation}
of clamped graphs or
\begin{equation}\label{eq:def_class_navier}
  \{u:\overline{\Omega}\to \mathbb{R} :\,  u=\varphi\mbox{ on } \partial \Omega\}
\end{equation}
of hinged graphs, respectively.
According to \cite{Nitsche}, $\gamma\in [0,1] $ is a physically relevant condition,
which implies that $\frac{1}{4}H^2- \gamma K\ge 0$. We expect that this condition%
--among others--will be needed to ensure regularity of a minimiser of $W_\gamma$.
For the compactness and lower semicontinuity properties stated in the present paper, however, we allow for arbitrary $\gamma\in\R$.

In order to explain  the notion of Dirichlet and Navier boundary value problems for Willmore surfaces
let us assume that we have a smooth minimiser of  $W_\gamma$ in the class (\ref{eq:def_class_dirichlet})
or (\ref{eq:def_class_navier}), respectively.
In the first case, i.e. considering a minimiser in the class of clamped graphs, one would have a solution  for the Willmore equation 
\begin{equation}\label{eq:Willmore}
 \Delta_{\graph(u)} H +2H\Big(\frac{1}{4}H^2-K\Big)=0\qquad \mbox{\ in\ }\Omega
\end{equation}
under Dirichlet boundary conditions
\begin{equation} \label{eq:dirichlet}
\displaystyle 
u=\varphi, \quad \frac{\partial u}{\partial \nu} = \frac{\partial \varphi}{\partial \nu} \qquad\mbox{ on } \partial \Omega,
\end{equation}
see \cite[(25)]{Nitsche}.
Here $\Delta_{\graph(u)}$ denotes the Laplace-Beltrami operator on ${\graph(u)}$ with respect to the first 
fundamental form. According to Remarks~\ref{rem:geod_curv} and \ref{rem:Gauss_Bonnet} below the shape of $\Omega$ and the 
Dirichlet data (\ref{eq:dirichlet}) completely determine $\int_{\Omega} K\; \sqrt{1+ | \nabla u |^2}\, dx$.
So, in order to solve the Dirichlet  problem, the parameter $\gamma$ does not play any role and without loss 
of generality we may restrict ourselves to minimising $W_0$.

Let us assume now that $u$ is a smooth minimiser  for $W_\gamma$ in the class (\ref{eq:def_class_navier})
of hinged graphs. Such a minimiser then  solves the  Willmore equation 
\begin{equation}\tag{\ref{eq:Willmore}}
 \Delta_{\graph(u)} H +2H\Big(\frac{1}{4}H^2-K\Big)=0\qquad \mbox{\ in\ }\Omega
\end{equation}
under Navier boundary conditions
\begin{equation} \label{eq:navier}
\displaystyle 
u=\varphi, \quad H= 2\gamma \kappa_N\qquad \mbox{ on } \partial \Omega.
\end{equation}
Here,  $\kappa_N$ denotes the normal curvature of the boundary curve $\partial \graph(u)$ with respect to the upward
pointing unit normal vector field
$N:=\frac{1}{\sqrt{1+| \nabla u|^2}}(-\nabla u,1)$
of $\graph(u)$. 
The second boundary condition $H= 2\gamma \kappa_N $ arises as a natural one  due to 
the larger class of admissible comparison functions, see  \cite[(32)]{Nitsche}.
The case $\gamma=0$, i.e.~prescribing $H|_{\partial \Omega}=0$, is special since here 
one may just seek solutions of the minimal surface equation subject to the boundary condition $u |_{\partial \Omega}=
\varphi|_{\partial \Omega}$, see e.g. \cite[Sect. 14]{GilbargTrudinger} or \cite{Giusti}. A recent paper by Bergner and Jakob \cite{BergnerJakob}
ensures that one even does not miss solutions when using this approach. 

These observations motivate speaking of Dirichlet and Navier boundary conditions in Sections~\ref{sec:minimiser_Dirichlet}
and \ref{sec:minimiser_Navier} respectively, although we do in general not expect sufficient regularity of the solutions 
constructed in Theorem~\ref{thm:existence_minimiser} and Remark~\ref{rem:Wgamma} 
to solve the above mentioned boundary value problems in a classical sense. 

Sch\"atzle \cite{Schaetzle} solved the Dirichlet problem for Willmore surfaces in the very general context
of immersions in $\mathbb{S}^3$.
This approach, however, does not give easy access to more detailed geometric information on the solution.
 In particular, even in the case of rather simple and regular boundary data it is not obvious how to single out graph solutions under suitable assumptions on the data.
 Concerning classical solvability of boundary value problems for the 
Willmore equation under \emph{symmetry} assumptions one may see \cite{BDAF,DADW,DFGS,DeckelnickGrunau} and references therein.
According to \cite{Dall'Acqua}, for strictly star--shaped $\Omega$ and $\varphi=0$, the constant function $u= 0$ is the unique
solution to the Dirichlet problem. Due to the strongly nonlinear character 
and the lack of convexity of this problem we do in general not expect uniqueness;
numerical evidence is given in \cite{DKS}.

We remark that many papers have dealt with closed Willmore surfaces (compact without boundary); we mention only 
\cite{BauerKuwert,Simon} for existence of (minimising) Willmore surfaces of any prescribed genus. Further information 
can also be found in the lecture notes \cite{KuSc12} and the survey article \cite{MarquesNeves} on the recent proof of the Willmore conjecture.

\subsection{Main results}
\label{sec:intro_2}

In our work the major benefit of working with graphs, i.e.~using  a  non-parametric approach, is the validity of a-priori diameter and area bounds which
are not available in the general parametric setting. More precisely, the corresponding 
result (see Section~\ref{sec:bounds} below) reads as follows:

\begin{customthm}{\ref{thm:aprioribounds}}
Suppose that $u \in H^2(\Omega)$ satisfies $u-\varphi \in H^1_0(\Omega)$. Then there exists a constant $C$ that only depends
on $\Omega$  and $ \Vert \varphi \Vert_{W^{2,1}(\partial \Omega)}  $ such that
\begin{equation*} 
\displaystyle 
\sup_{x \in \Omega} | u(x)| + \int_{\Omega} \sqrt{1+|\nabla u(x) |^2} \, dx \leq C \bigl( W_0(u)^2 +1 \bigr).
\end{equation*}
\end{customthm}
We will also present several examples that in particular demonstrate that no a-priori bounds in $W^{1,p}(\Omega)$ 
in terms of the Willmore energy are available for any $1<p\le \infty$. 
Unlike the axially symmetric setting (see e.g. \cite{DFGS}) we have further not  yet  succeeded to modify minimising sequences such that they
obey stronger bounds  than in Theorem~\ref{thm:aprioribounds}.

Our main results   are stated in Theorem~\ref{thm:L1lowersemicontinuity}. 
We show that sequences $(u_k)_{k\in\N}\subset H^2(\Omega)$ with uniformly bounded Willmore energy 
and obeying the boundary condition $(u_k-\varphi)\in H^1_0(\Omega)$
have $L^1(\Omega)$-convergent subsequences. 
Limit points belong to $BV(\Omega)\cap L^\infty(\Omega)$ and enjoy additional (weak) regularity properties that allow for the 
definition of an absolutely continuous contribution to the Willmore functional (see Section~\ref{sec:lowersemicontinuous} for details). 
This contribution then gives a lower bound for the energies of the approximating sequence. For simplicity we state here only a corollary of 
Theorem~\ref{thm:L1lowersemicontinuity} and assume that the limit point $u$ belongs to $W^{1,1}(\Omega)$, which allows to control the full Willmore functional.

\begin{customthm}{\ref{thm:L1lowersemicontinuity}'}
Let $(u_k)_{k\in\N}$ be a given sequence in $H^2(\Omega)$ that satisfies
\begin{align*}
	u_k-\varphi \in H^1_0(\Omega)\text{ for all }k\in\N\quad\text{ and }\quad\liminf_{k\to\infty} W_0(u_k) \,<\,\infty.
\end{align*}
Then there exists a subsequence $k\to\infty$ and $u\in BV(\Omega)\cap L^\infty(\Omega)$ with
\begin{equation*}
 u_k\to u \text{ in } L^1(\Omega)\quad (k\to\infty).
\end{equation*}
If in addition $u\in W^{1,1}(\Omega)$ then the mean curvature $H=\nabla\cdot \frac{\nabla u}{\sqrt{1+|\nabla u|^2}}\in L^2(\Omega)$ exists in the weak sense and  
\begin{equation*}
 \frac{1}{4}\int_\Omega H^2  \sqrt{1+|\nabla u|^2}\,dx 
 \,\le\, \liminf_{k\to\infty} W_0(u_k).
\end{equation*}
holds.
\end{customthm}

In order to simplify the presentation, in the remainder of this introduction we restrict ourselves to Dirichlet boundary conditions (\ref{eq:dirichlet}) 
and to the case $\gamma=0$. 
As explained before, for minimising sequences, or more generally for sequences with uniformly 
bounded Willmore functional, we do not have stronger uniform bounds than those in Theorem~\ref{thm:aprioribounds}.
So,  the regularity of limit points can at a first instance  not be proved 
to be  better than $L^\infty (\Omega) \cap BV(\Omega)$. On this space, however, the
Willmore functional is not defined in the classical sense and we therefore introduce the $L^1$-lower semicontinuous relaxation of the Willmore 
functional:
$$
\overline{W}:L^1( \Omega)\to [0,\infty],\quad \overline{W}(u):=\inf \{ \liminf_{k\to\infty} W_0(u_k): \MDir\ni u_k \to u 
\mbox{\ in\ } L^1 (\Omega)\},
$$
where
\begin{align}
	\MDir\,:=\, \{v\in H^2(\Omega):v-\varphi \in H^2_0(\Omega)\}. \label{eq:def-MDir}
\end{align}
For geometric curvature functionals such relaxations are well established,
see e.g. \cite{AmbrosioMasnou,BellettiniDalMasoPaolini,BellettiniMugnai} and the references therein. One advantage is that 
lower-semicontinuity properties are immediately obtained; on the other hand, a more explicit characterisation of the relaxation is often difficult. 
However, we prove in Theorem~\ref{thm:coinidence_relaxation} that $W_0$ and $ \overline{W}$ coincide in $\MDir$, so that $\overline{W}$ is actually an extension of $W_0$.

As a corollary of Theorem~\ref{thm:L1lowersemicontinuity} we are able to prove existence of a minimiser for the extended functional $ \overline{W}$:
\begin{customthm}{\ref{thm:existence_minimiser}}
There exists a function $u\in BV( \Omega)\cap L^\infty (\Omega)$ such that
$$
\forall v\in L^1( \Omega):\quad \overline{W}(u)\le \overline{W}(v).
$$
\end{customthm}

The regularity properties stated in Theorem~\ref{thm:L1lowersemicontinuity}  are in particular satisfied for any function 
$u\in L^1(\Omega)$ with $\overline{W}(u)<\infty$ and so, for the minimiser constructed above.
Furthermore, in Proposition~\ref{prop:W_vs_overline_W} we prove that $ \overline{W}(u)<\infty$ not only allows for defining a 
generalised Willmore functional (or rather the absolutely continuous part), but also encodes attainment of the boundary conditions (\ref{eq:dirichlet}).

\medskip

The proofs of these results all heavily rely on the area and diameter bounds provided by Theorem~\ref{thm:aprioribounds}.
Together with the boundedness of the Willmore energies this yields sufficiently strong compactness properties for several 
$H^1$-bounded  auxiliary 
sequences such as $q_k=(1+|\nabla u_k|^2)^{-5/4}$ and $v_k=q_k\nabla u_k $. In particular we are able to deduce that for limits 
$u\in BV(\Omega)\cap L^\infty(\Omega)$ as in Theorem~\ref{thm:L1lowersemicontinuity} that $v=q\nabla u$ holds as 
vector-valued Radon measures, for $v,q\in H^1(\Omega)$. Moreover, the set $\{q=0\}$ describes the set where the graph of $u$ may become vertical. 
Our results then are deduced by exploiting several fine properties of Sobolev and $BV$ functions. 

\medskip
Restricting ourselves to the graph case --i.e.~working in the non-parametric framework-- 
allows to use relatively elementary tools (compared to the use of geometric measure theory methods in the parametric case),
 but on the other hand introduces additional difficulties that are due to the particular choice of parametrisation and are in 
particular related to the possible occurrence of vertical parts of the graph when passing to a limit.
The condition of being a graph imposes an obstruction 
to the class of admissible ``surfaces''. Minimising the Willmore functional in this class means solving a kind 
of an obstacle problem, as long as one cannot prove $C^1$-estimates or $C^1$-regularity: 
We expect our minimiser to solve the Willmore equation on
the non-vertical parts of the graph while this can in general not be expected on the
vertical parts.  Functions having vertical parts lie somehow on the ``boundary'' of the set
of admissible functions since some variations would result in surfaces which are no
longer graphs but could possibly nevertheless have smaller Willmore energy.
We think that extra conditions on the data $\Omega$, $\varphi$, and $\gamma$ will be needed
to prove that our minimiser  is  indeed smooth and attains the
boundary conditions in a classical sense,  but such a characterisation is out of the scope of the present paper.

The paper is organised as follows. In the next section we first state some definitions from differential geometry and properties of Sobolev and 
$BV$ functions, and then prove some basic estimates. 
Section~\ref{sec:bounds} presents the main a-priori bounds and examples that show that these bounds are in some sense optimal. 
The main compactness and lower-semicontinuity properties are formulated and proved in Section~\ref{sec:lowersemicontinuous}. 
The last section derives the implications for the minimisation of the (relaxed) Willmore functional and in particular discusses in which sense the boundary 
conditions are attained for functions with finite relaxed energy. Finally, extensions to 
more general functionals of Canham--Helfrich type are indicated.
%
%
\section{Preliminaries and basic estimates}
\label{sec:useful}
\subsection{Differential geometry of graphs}
For a smooth function
$u: \Omega \rightarrow \mathbb{R}$ we let
\begin{displaymath}
\graph(u):= \lbrace (x,u(x)) \, | \, x \in \Omega \rbrace
\end{displaymath}
be its graph with unit normal field $N:=\frac{1}{\sqrt{1+| \nabla u|^2}}(-\nabla u,1)$.  The first and second 
fundamental forms of $\graph(u)$ are given by
\begin{displaymath}
(g_{ij}) = 
\begin{pmatrix} 
1+ u_{x^1}^2 & u_{x^1} u_{x^2} \\
u_{x^1} u_{x^2} & 1 + u_{x^2}^2
\end{pmatrix},
\qquad
A=(h_{ij}) =  \frac{1}{Q} (u_{x^i x^j}),
\end{displaymath}
where $Q= \sqrt{1+ | \nabla u|^2}$ denotes the area element. The mean curvature and the Gau{\ss} curvature  of
$\graph(u)$ are then given by
\begin{align}
	H &=  \nabla \cdot \frac{\nabla u}{Q}\,=\, \frac{1}{Q}\Bigl(\id -w\otimes w\Bigr): D^2u, \label{eq:Hdef}\\
	K &=  \frac{\operatorname{det}D^2 u}{Q^4} \,=\, \det Dw, \label{eq:Kdef}
\end{align}
where we have set $w:=\nabla u / Q$.
In particular, the Willmore functional for the graph of $u$ reads
\begin{eqnarray} 
W_\gamma (u) & = & \frac{1}{4} \int_{\Omega} H^2 \; \sqrt{1+ | \nabla u |^2} \, dx -\gamma \int_{\Omega} K \; \sqrt{1+ | \nabla u |^2} \, dx\nonumber\\
&=& \frac{1}{4} \int_{\Omega} | \nabla \cdot \Bigl( \frac{ \nabla u}{Q} \Bigr) |^2 \; Q \, dx-\gamma \int_{\Omega} \frac{\operatorname{det}D^2 u}{Q^3} \, dx.
\label{eq:Darstellung:Willmore}
\end{eqnarray}
Mean curvature, Gau{\ss} curvature and the length of the second fundamental form  
\begin{displaymath}
| A |_g^2 = \sum^2_{i,j,k,\ell=1} g^{ij} g^{k\ell} h_{ik} h_{j\ell}\,=\, \trace (g^{-1}Ag^{-1}A), \qquad (g^{ij})=(g_{ij})^{-1},
\end{displaymath}
are related by the formula
\begin{equation} \label{eq:AH}
\displaystyle
|A|_g^2 = H^2 - 2K.
\end{equation}
\subsection{Functions of bounded variation and fine properties of Sobolev functions}
\label{sec:prelim}
We denote by $B_r(x)$ for $x\in\R^n$, $r>0$ the corresponding open ball, by $\LL^n$ the $n$-dimensional Lebesgue measure and by $\Ha^k$ the 
$k$-dimensional Hausdorff measure. We set $|A|=\LL^n(A)$ for $A\subset\R^n$. 
The precise representative of a function $u\in L^1_{\loc}(\R^n)$,
\begin{align*}
	u^*(x) \,:=\, \lim_{r\downarrow 0}\fint_{B_r(x)} u(y)\,dy,
\end{align*}
is well-defined almost everywhere, where we have used the notation $\fint_{B_r(x)} u:= \frac{1}{|B_r(x)|}\int_{B_r(x)}u$. 
The Lebesgue points of $u$ are given by all $x\in\R^n$ such that
\begin{align*}
	\lim_{r\downarrow 0}\fint_{B_r(x)} |u(x)-u(y)|\,dy\,=\, 0.
\end{align*}
The usual Sobolev spaces are denoted by $H^\ell(\Omega)$, $\ell\in \N_0$, $H(\operatorname{div},\Omega)$ denotes  the space of 
$L^2(\Omega,\mathbb{R}^n)$--vector fields which have a weak 
divergence in $L^2(\Omega)$.
For the definition and properties of the space $H(\operatorname{div},\Omega)$ see e.g. 
\cite[Chapter 1, Section 1.2]{Temam}.

We next recall some basic definitions and properties of functions of bounded variation. For a detailed exposition we refer to the book of Ambrosio, 
Fusco and Pallara \cite{AmFP00}.

A function $u\in L^1(\Omega)$ belongs to the space of functions of bounded variation if the distributional derivatives $D_i u$ are given by 
finite Radon measures on $\Omega$. We then write $u\in BV(\Omega)$ and denote by $\nabla u$ the vector-valued Radon measure with 
components $D_i u$. For $u\in BV(\Omega)$ the total variation of  $\nabla u$ is given by
\begin{align*}
	\int_\Omega |\nabla u| \,=\, \sup \bigl\{ \int_\Omega u \nabla\cdot\varphi\,dx \,:\, \varphi\in C^1_0(\Omega,\R^n), \|\varphi\|_\infty\leq 1\bigr\}.
\end{align*}
The measure $\nabla u$ can be decomposed as
\begin{align}
	\nabla u \,=\, \nabla^a u\LL^n + \nabla^s u\,=\, \nabla^a u\LL^n + \nabla^ju + \nabla^c u,
\end{align}
where $\nabla^a u\LL^n$ denotes the absolutely continuous part of $\nabla u$ with respect to $\LL^n$, and $\nabla^s u$, $\nabla^j u$, $\nabla^c u$ 
are the singular part, the jump part and the Cantor part of $\nabla u$, respectively. Letting
\begin{align*}
	\Sigma_u \,&:=\, \{x\in\Omega \,:\, \lim_{\varrho\downarrow 0}\varrho^{-n}|\nabla u|(B_\varrho(x)) =\infty\},\quad
	\Theta_u \,:=\, \{x\in\Omega \,:\, \liminf_{\varrho\downarrow 0}\varrho^{1-n}|\nabla u|(B_\varrho(x)) >0\}
\end{align*}
we have $\nabla^a u \LL^n= \nabla u \mrestr (\Omega\setminus \Sigma_u)$, $\nabla^j u = \nabla u\mrestr\Theta_u$, 
$\nabla^c u = \nabla u \mrestr(\Sigma_u\setminus \Theta_u)$, see \cite[Proposition 3.92]{AmFP00}. 
The set $\Sigma_u$ has Lebesgue measure zero, see \cite[Theorem 1.6.1]{EvGa92}.
Moreover, by \cite[Theorem 3.78]{AmFP00} 
$\nabla^j u = (u^+-u^-)\otimes\nu_u\Ha^{n-1}\mrestr J_u$, where the approximate jump set $J_u\subset \Theta_u$ (see \cite[Definition 3.67]{AmFP00}) is 
$(n-1)$-rectifiable, $\nu_u$ is a Borel unit normal vector field to $J_u$ and $u^+,u^-$ are the traces of $u$ on $J_u$.
The complement $S_u$ of the set of Lebesgue points of $u$ is a Borel set with $\LL^n$-measure zero and satisfies $\Ha^{n-1}(S_u\setminus J_u)=0$, 
see \cite[Definition 3.63 and Theorem 3.78]{AmFP00}.

For a function $u\in BV(\Omega)$ we call $x\in\partial\Omega$ a Lebesgue boundary point if
\begin{align*}
	\lim_{r\downarrow 0}\fint_{B_r(x)\cap\Omega} |u(x)-u(y)|\,dy\,=\, 0,
\end{align*}
where $\fint_{B_r(x)\cap\Omega} u:= \frac{1}{|B_r(x)\cap\Omega|}\int_{B_r(x)\cap\Omega}u$
and $u(x)$ is defined in the sense of boundary traces.

\medskip

We next recall the notion of capacity and some fine properties of Sobolev functions. We follow \cite{EvGa92}. 
For $A\subset\R^n$ and $1\le p<n$ the $p$-capacity is defined as
\begin{align*}
	\capa_p(A) \,:=\, \inf \bigl\{\int_{\R^n} |\nabla f|^p\,dx \,:\, f\geq 1 \text{ in a neighbourhood of }A,\,f\geq 0\bigr\},
\end{align*}
where the infimum is taken over all $f\in L^{p^*}(\R^n)$ with $\nabla f\in L^p (\R^n,\R^n)$, $p^*=\frac{np}{n-p}$.\\
If $\capa_p(A)=0$ then $\Ha^s(A)=0$ for all $s>n-p$ \cite[Theorem 4.7.4]{EvGa92}.

For a function $u\in W^{1,p}(\R^n)$,  $1\leq p<n$, there exists a Borel set $E$ of $p$-capacity zero such that 
the precise representative $u^*$ of $u$ is  well defined on $\R^n\setminus E$ and 
each $x\in \mathbb{R}^n\setminus E$ is a Lebesgue point of $u^*$ 
\cite[Theorem 4.8.1]{EvGa92}. Moreover, for every $\eps>0$ there exists an open set $V$ with $\capa_p(V)\leq \eps$ such 
that $u^*$ is continuous on $\R^n\setminus V$.

\medskip

We say that $f:\R^n\to\R^m$ has an approximate limit at $x\in\R^n$ if there exists $a\in\R^m$ such that for every $\eps>0$
\begin{align*}
	\lim_{r\downarrow 0} \frac{\left| B_r(x)\cap\{|f-a|\geq\eps\}\right| }{|B_r(x)|} \,=\, 0,
\end{align*}
see \cite[Section 1.7.2]{EvGa92}. In this case the approximate limit $\aplim_{y\to x}f(y):=a$ is uniquely determined. 
We call $f$ approximately continuous at $x\in\R^n$ if $\aplim_{y\to x}f(y)=f(x)$.
By \cite[Remark 5.9.2]{Ziem89} $f:\R^n\to\R^m$
 is approximately continuous at $x\in\R^n$ if and only if there exists a measurable set $E\subset\R^n$ with $x\in E$ such that $f|_{E}$ is continuous at $x$ 
and the set $E$ has full density in $x$, that is
\begin{align*}
	\lim_{r\downarrow 0}\frac{\left| B_r(x)\cap E \right|}{|B_r(x)|} \,=\, 1.
\end{align*}
Therefore the products (quotients) of approximately continuous real functions are approximately continuous (in all points where the denominator does not vanish).

We say that $f:\Omega\to\R^m$ has an approximate limit at $x\in\partial\Omega$ if there exists $a\in\R^m$ such that for every $\eps>0$
\begin{align*}
	\lim_{r\downarrow 0} \frac{\left| B_r(x)\cap\Omega\cap\{|f-a|\geq\eps\}\right| }{|B_r(x)\cap\Omega|} \,=\, 0.
\end{align*}

\subsection{Basic estimates}\label{sec:basic-estimates}
The following result shows how the second derivatives of $u$ are controlled in terms of 
$|A|_g^2$.

\begin{lemma}
Let $|D^2 u|^2=u_{x^1x^1}^2+2u_{x^1x^2}^2+u_{x^2x^2}^2$ denote the euclidean norm
of the Hessian of $u$. Then
\begin{equation}\label{eq:Hessian_vs_secondff}
\frac{1}{Q(x)^2} | D^2 u(x) |^2\ge 
 |A(x) |_g^2\ge \frac{1}{Q(x)^6} | D^2 u(x) |^2.
\end{equation}
\end{lemma}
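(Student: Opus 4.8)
The plan is to reduce the statement to an elementary inequality about the symmetric, positive definite matrix $g$ and the symmetric Hessian. Writing $M:=D^2u$ and recalling from the definition of the second fundamental form that $A=\frac{1}{Q}M$, the definition of $|A|_g^2$ gives
\[
 |A|_g^2=\trace\bigl(g^{-1}Ag^{-1}A\bigr)=\frac{1}{Q^2}\,\trace\bigl(g^{-1}Mg^{-1}M\bigr).
\]
Thus, setting $G:=g^{-1}$, it suffices to establish the two-sided bound
\[
 \frac{1}{Q^4}\,|M|^2\ \le\ \trace(GMGM)\ \le\ |M|^2,
\]
where $|M|^2=\trace(M^2)=|D^2u|^2$ is the squared Frobenius norm (using that $M$ is symmetric); multiplying through by $Q^{-2}$ then yields \eqref{eq:Hessian_vs_secondff}.

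First I would diagonalise $g=\id+\nabla u\otimes\nabla u$. A vector parallel to $\nabla u$ is an eigenvector with eigenvalue $1+|\nabla u|^2=Q^2$, while every vector orthogonal to $\nabla u$ is an eigenvector with eigenvalue $1$. Hence $G=g^{-1}$ is symmetric positive definite with eigenvalues $Q^{-2}$ and $1$, so that in the operator (spectral) norm $\|\cdot\|$ one has $\|G^{1/2}\|=1$ and $\|G^{-1/2}\|=Q$.

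The key algebraic step deals with the fact that $GM$ need not be symmetric, so $\trace(GMGM)$ cannot be read off directly as a squared norm. Using $G=G^{1/2}G^{1/2}$ together with the cyclicity of the trace I would symmetrise, writing
\[
 \trace(GMGM)=\trace(B^2)=|B|^2,\qquad B:=G^{1/2}MG^{1/2},
\]
with $B$ symmetric. The submultiplicativity of the Frobenius norm, $|PXR|\le\|P\|\,|X|\,\|R\|$, then gives $|B|\le\|G^{1/2}\|^2\,|M|=|M|$ for the upper bound, while rewriting $M=G^{-1/2}BG^{-1/2}$ yields $|M|\le\|G^{-1/2}\|^2\,|B|=Q^2|B|$, that is $|B|\ge Q^{-2}|M|$, for the lower bound. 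Squaring these estimates gives $Q^{-4}|M|^2\le|B|^2=\trace(GMGM)\le|M|^2$, which is exactly the reduced claim.

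The only point requiring care is the non-commutativity just mentioned, which is resolved cleanly by the symmetrisation $B=G^{1/2}MG^{1/2}$; once the eigenvalues of $g$ have been identified, both inequalities follow from standard norm estimates, so I do not expect any genuine obstacle beyond this bookkeeping.
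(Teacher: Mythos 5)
Your proof is correct and follows essentially the same route as the paper: both arguments rest on the spectral bounds $Q^{-2}\id\le g^{-1}\le \id$ and on introducing the symmetric square root of $g^{-1}$ to turn $\trace(g^{-1}Ag^{-1}A)$ into a sum of squares. The only difference is cosmetic bookkeeping -- you symmetrise fully via $B=G^{1/2}MG^{1/2}$ and invoke submultiplicativity of the Frobenius norm, whereas the paper works with the one-sided product $G^{1/2}A$ in index notation and applies the quadratic-form estimate twice.
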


This possibly very strong deviation of $|A(x) |_g^2$ from $| D^2 u(x) |^2$ is one
of the main difficulties in deducing a-priori estimates for minimising sequences 
of the Willmore functional.

\begin{proof}
 We have that 
$$
\left( g^{ij}(x) \right)_{ij} = \frac{1}{Q^2}
\begin{pmatrix}
 1+u^2_{x^2} &- u_{x^1} u _{x^2} \\
- u_{x^1} u _{x^2} &  1+u^2_{x^1}
\end{pmatrix} \,=\, \frac{1}{Q^2} \Bigl(\id + \nabla u^\perp\otimes\nabla u^\perp\Bigr)
$$
is a symmetric positive definite matrix with smallest eigenvalue equal to $\frac{1}{Q^2}$.
One the one hand this yields the estimate:
$$
\forall \eta \in \mathbb{R}^2:\qquad |\eta|^2\ge  \sum_{i,j} g^{ij}(x) \eta_i\eta_j\ge \frac{1}{Q^2} |\eta|^2.
$$
On the other hand we find a uniquely determined symmetric positive definite square root $\left(b^{ij}(x)\right)$ 
of $\left( g^{ij}(x) \right)$, i.e.
$$
 g^{ij}(x) =\sum_{k,\ell} b^{ik}(x)\delta_{k \ell}  b^{\ell j}(x).
$$
Denoting
$
v^i_j:=\sum_\ell b^{i\ell}h_{\ell j}
$
we see that
\begin{eqnarray}
| A |_g^2 &=& \sum_{i,j,k,\ell} g^{ij} g^{k\ell} h_{ik} h_{j\ell}
=\sum_{i,j,k,\ell,m,n} g^{k\ell} b^{im}\delta_{mn}b^{nj} h_{ik} h_{j\ell}  \nonumber\\
&=&\sum_{k,\ell,m,n}g^{k\ell} v^m_k   \delta_{mn} v^n_\ell
=\sum_{k,\ell,m} g^{k\ell} v^m_k v^m_\ell \nonumber \\
&\ge &  \frac{1}{Q^2}\sum_{k,m} \left( v^m_k \right)^2
= \frac{1}{Q^2}\sum_{i,j,k,\ell}\delta_{ij} \delta^{k\ell}v^i_k v^j_\ell
=\frac{1}{Q^2}\sum_{i,j,k,\ell,m,n}\delta_{ij} \delta^{k\ell}b^{i m}h_{m k}b^{j n}h_{n \ell}  \nonumber \\
&=&\frac{1}{Q^2}\sum_{k,m,n}g^{mn}  h_{m k} h_{n k} 
\ge  \frac{1}{Q^4}\sum_{m,k} \left(h_{m k}\right)^2
= \frac{1}{Q^6}\sum_{m,k}\left(u_{x^m x^k}\right)^2=\frac{1}{Q^6}|D^2 u|^2. \label{est}
\end{eqnarray}
As for the bound from above we find by using similar calculations as before that
$$
| A |_g^2 \le \sum_{m,k} \left(h_{m k}\right)^2
= \frac{1}{Q^2}\sum_{m,k}\left(u_{x^m x^k}\right)^2=\frac{1}{Q^2}|D^2 u|^2.
$$
\end{proof}

In what follows the geodesic 
curvature of the boundary curve $\partial\graph(u)$
with respect to the surface $\graph(u)$ will be of some importance. We derive here an explicit estimate and representation that are used below.

\begin{remark}\label{rem:geod_curv} 
We consider $u\in H^2(\Omega)$ satisfying $(u-\varphi)\in H^1_0 (\Omega)$.
Let $s\mapsto Y(s)\in \partial\graph(u)$
denote a positively oriented parametrisation of (a connected component of)
the boundary $\partial\graph(u)$. Positive orientation means that at any point  $p\in\partial \graph(u)$,
 the determinant of the unit tangent vector, the unit co-normal 
pointing inward to  $\graph(u)$ and $N(p)$ is positive.

Then its (signed) geodesic curvature is given by
$$
\kappa_g (s)=\frac{1}{|Y'(s)|^3}
\operatorname{det} \left( Y'(s), Y''(s),N(Y(s))\right).
$$
We take now a positively oriented parametrisation $s\mapsto  c (s) \in \partial\Omega$ of
(a connected component of) $\partial\Omega$ with respect to its arclength
so that with the natural unit tangent vector $\tau(s)= c '(s)$, we have that $(\nu(  c (s) ),\tau(s))$
form a positively oriented orthonormal basis of  $\mathbb{R}^2$. In particular we have
that $\nu^1=\tau^2$, $\nu^2=-\tau^1$ and $\tau'(s)=-\kappa (s)\nu(  c (s) )$ with $\kappa$
the (signed) curvature of $\partial\Omega$ (being nonnegative on the ``convex'' parts of $\partial\Omega$). 
%
With a slight abuse of notation we write
$$
\varphi(s)=\varphi ( c (s) ),\qquad u_\nu(s)=\frac{\partial u}{\partial \nu}( c (s) ),\qquad  Y(s)=(c (s),\varphi(s))^T
$$
and find by using $u=\varphi$ on $\partial \Omega$ that
$$
N(Y(s))=\frac{1}{\sqrt{1+\varphi'(s)^2+u_\nu(s)^2}}
\begin{pmatrix}
 -(\tau^1 \varphi' + \tau^2 u_\nu)\\
-(\tau^2 \varphi' -\tau^1 u_\nu) \\
1
\end{pmatrix}.
$$
For the geodesic 
curvature of $ \partial\graph(u)$ we obtain, using $\tau^1\,(\tau^1)'+\tau^2\, (\tau^2)'=0$,
\begin{eqnarray*}
 \kappa_g (s)&=&
\frac{-u_\nu(s) \varphi''(s)+\kappa (s) (1+\varphi'(s)^2)}{(1+\varphi'(s)^2+u_\nu (s)^2)^{1/2} (1+\varphi'(s)^2 )^{3/2}}.
\end{eqnarray*}
This formula shows in particular that the geodesic curvature of $ \partial\graph(u)$ 
as a curve in the unknown surface $\graph(u)$ 
can be computed just from its Dirichlet data (\ref{eq:dirichlet}).
We observe that the assumption  $u=\varphi$ on $\partial \Omega$ already allows for estimating
\begin{displaymath}
| \kappa_g (s)  |  \leq  \frac{| \varphi''(s) | +   | \kappa(s) | }{(1+\varphi'(s)^2 )^{1/2}},
\end{displaymath}
hence
\begin{equation}\label{eq:ged_curv}
 \int_{\partial \graph(u)} |\kappa_g (s) | \, ds  \leq \int_{\partial\Omega} ( | \varphi''(s) | + | \kappa(s) | ) \, ds.
\end{equation}

%
\end{remark}

\begin{remark}\label{rem:Gauss_Bonnet}
By virtue of the Gau\ss-Bonnet formula
$$
  \int_{\Omega} K Q \, dx +\int_{\partial \graph(u)} \kappa_g ds=2 \pi \chi(\graph(u))=2 \pi \chi(\Omega),
$$
the integral over the Gau\ss\ curvature is given by the boundary integral of the geodesic curvature and the 
 Euler characteristic $\chi(\Omega)$ of the smoothly bounded domain $\Omega\subset\mathbb{R}^2$.
The Euler characteristic is defined as usual by means of triangulations. 
If $\Omega$ is $m$-fold connected, i.e. $\partial\Omega$ consists of $m$ connected components 
($\Omega$ contains $(m-1)$ holes), then $\chi(\Omega)=2-m$. See formula (13) on p. 38 in  \cite{DHS}.
 
In particular, the total Gau\ss\ curvature $\int_{\Omega} K Q \, dx$ of a function $u\in C^2(\overline{\Omega})$ 
is already determined by the Dirichlet boundary condition \eqref{eq:dirichlet}.
\end{remark}

\vspace{2mm}

In Theorem~\ref{thm:aprioribounds} below we shall deduce maximum modulus and area
estimates in terms of integral norms of the second fundamental form. In the following lemma 
we show first how to bound these by the Willmore functional, the data and the Euler
characteristic $\chi(\Omega)$ of  the smoothly bounded domain $\Omega\subset\mathbb{R}^2$.

\begin{lemma}\label{lemma:Abounds}
Suppose that $u \in H^2(\Omega)$ satisfies $u-\varphi \in H^1_0(\Omega)$. Then
\begin{align} 
	\Big| \int_\Omega KQ\,dx\Big| \,&\leq\,  \| \varphi \|_{W^{2,1}(\partial\Omega)}+ \| \kappa \|_{L^1(\partial\Omega)} + 2 \pi |\chi(\Omega)|, \label{eq:Kbound}\\
	\label{eq:Abound}
	\int_{\Omega} | A |_g^2 Q \, dx  &\leq  4 W_0(u) + 2 \bigl( \| \varphi \|_{W^{2,1}(\partial\Omega)}+ \| \kappa \|_{L^1(\partial\Omega)} \bigr) - 4 \pi \chi(\Omega),
\end{align}
where $\| \varphi \|_{W^{2,1}(\partial\Omega)}= \| \varphi \circ c \|_{W^{2,1}(I)}$ and 
$c:I \rightarrow \mathbb{R}^2$ is an arclength--parametrisation of $\partial \Omega$.
Moreover, the functionals $W_0$ and $W_\gamma$ are closely related:
\begin{equation}\label{eq:rel_W0_Wgamma} 
 \left|  W_0(u) -W_\gamma (u) \right| \le | \gamma  | \cdot\left(  \| \varphi \|_{W^{2,1}(\partial\Omega)}+ \| \kappa \|_{L^1(\partial\Omega)}+ 2\pi \left| \chi(\Omega) \right| \right).
\end{equation}

\end{lemma}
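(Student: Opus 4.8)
The plan is to reduce all three inequalities to the single identity furnished by the Gauss--Bonnet formula of Remark~\ref{rem:Gauss_Bonnet} together with the pointwise geodesic-curvature bound \eqref{eq:ged_curv}, and then to conclude by elementary algebra. The one genuine difficulty is that Remark~\ref{rem:Gauss_Bonnet} is stated for $u\in C^2(\overline\Omega)$, whereas here $u$ is merely $H^2$. I would therefore begin by extending the identity
\[
\int_\Omega KQ\,dx+\int_{\partial\graph(u)}\kappa_g\,ds=2\pi\chi(\Omega)
\]
to every $u\in H^2(\Omega)$ with $u-\varphi\in H^1_0(\Omega)$.

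For this extension I would choose approximants $u_n=\varphi+\psi_n\in C^2(\overline\Omega)$ with $\psi_n\to u-\varphi$ in $H^2(\Omega)$ and $\psi_n|_{\partial\Omega}=0$; such a sequence exists because smooth functions vanishing on $\partial\Omega$ are dense in $H^2(\Omega)\cap H^1_0(\Omega)$ (here one uses $u-\varphi\in H^2\cap H^1_0$). The decisive feature of this choice is that every $\partial\graph(u_n)$ coincides with the fixed curve $s\mapsto(c(s),\varphi(s))$, so that in the geodesic-curvature integrand only the normal derivative varies. Writing Gauss--Bonnet for each $u_n$, I would pass to the limit as follows. In the interior, $D^2u_n\to D^2u$ in $L^2(\Omega)$ while $Q_n^{-3}\to Q^{-3}$ boundedly and a.e., whence $K_nQ_n=\det D^2u_n/Q_n^3\to KQ$ in $L^1(\Omega)$ (the products of Hessian entries converge in $L^1$, and the bounded factor $Q_n^{-3}$ is handled by dominated convergence). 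On the boundary, the traces satisfy $(u_n)_\nu\to u_\nu$ in $L^2(\partial\Omega)$, so along a subsequence $\kappa_{g,n}\to\kappa_g$ a.e.; since \eqref{eq:ged_curv} supplies the integrable majorant $|\varphi''|+|\kappa|$, dominated convergence yields convergence of the boundary integrals. This establishes the Gauss--Bonnet identity for $u\in H^2$.

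Granting the identity, the three estimates are immediate. For \eqref{eq:Kbound} I would solve for $\int_\Omega KQ\,dx=2\pi\chi(\Omega)-\int_{\partial\graph(u)}\kappa_g\,ds$ and estimate the geodesic term by \eqref{eq:ged_curv}, using $\int_{\partial\Omega}|\varphi''|\,ds\le\|\varphi\|_{W^{2,1}(\partial\Omega)}$ and $\int_{\partial\Omega}|\kappa|\,ds=\|\kappa\|_{L^1(\partial\Omega)}$. For \eqref{eq:Abound} I would invoke the algebraic relation \eqref{eq:AH}, $|A|_g^2=H^2-2K$, to obtain $\int_\Omega|A|_g^2Q\,dx=4W_0(u)-2\int_\Omega KQ\,dx$, substitute Gauss--Bonnet to rewrite $-2\int_\Omega KQ\,dx$ as $-4\pi\chi(\Omega)+2\int_{\partial\graph(u)}\kappa_g\,ds$, and bound the boundary term from above via $\kappa_g\le|\kappa_g|$ and \eqref{eq:ged_curv}. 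Finally \eqref{eq:rel_W0_Wgamma} follows from $W_\gamma(u)=W_0(u)-\gamma\int_\Omega KQ\,dx$ combined with the bound \eqref{eq:Kbound}.

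The main obstacle is precisely the passage from $C^2$ to $H^2$ in the Gauss--Bonnet identity: one must arrange the approximating sequence so as to fix the boundary curve (so that the boundary contribution depends continuously on the normal-derivative trace alone) and verify that the Jacobian term $\det D^2u_n$ converges in $L^1$, which relies on $L^2$-convergence of the \emph{full} Hessians rather than merely of the gradients. Once this is in place, the remaining arguments are purely algebraic.
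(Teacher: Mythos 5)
Your proof is correct and follows essentially the same route as the paper: reduce to the $C^2$ case via the Gauss--Bonnet formula, the geodesic-curvature bound \eqref{eq:ged_curv} and the identity $|A|_g^2=H^2-2K$, then approximate in $H^2$ by smooth functions with the fixed boundary trace $\varphi$. The only (harmless) difference is that you pass to the limit in the Gauss--Bonnet identity itself, including the boundary integral of $\kappa_g$, whereas the paper passes to the limit only in the resulting inequalities; both limit passages rest on the same dominated-convergence arguments.
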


\begin{proof} Let us first assume that $u \in C^2(\overline{\Omega})$ and $u=\varphi$ on $\partial \Omega$.
We use the notation and same orientation as in Remark~\ref{rem:geod_curv}. According to \eqref{eq:ged_curv}
\begin{equation*}
 \int_{\partial \graph(u)} | \kappa_g (s) | ds  \leq \int_{\partial\Omega} ( | \varphi''(s) | + | \kappa(s) | ) ds,
\end{equation*}
and by the Gau{\ss}--Bonnet Theorem (see Remark~\ref{rem:Gauss_Bonnet}) we obtain \eqref{eq:Kbound}
and (\ref{eq:rel_W0_Wgamma}).
We further deduce from (\ref{eq:AH}) that
\begin{equation}\label{eq:GaussBonnet}
\int_{\Omega} | A |_g^2 Q \, dx  =  \int_{\Omega} H^2 Q \, dx - 2 \int_{\Omega} K Q \, dx 
 =  4 W_0(u) + 2 \int_{\partial \graph(u)} \kappa_g ds - 4 \pi \chi(\graph(u)),
\end{equation}
and as above we deduce (\ref{eq:Abound}) in the case that $u \in C^2(\overline{\Omega})$. Finally suppose that $u \in H^2(\Omega)$ such that $u-\varphi \in H^1_0(\Omega)$. 
Then there exists a sequence 
$(u_k)_{k \in \mathbb{N}}$ such that $u_k \in C^2(\overline{\Omega}), u_k =\varphi$ on $\partial \Omega$ and $u_k \rightarrow u$ in 
$H^2(\Omega), k \rightarrow \infty$. We deduce from the generalised Lebesgue convergence theorem that
\begin{align*}
	\int_\Omega K_kQ_k\,dx \,&=\, \int_\Omega \frac{\det D^2u_k}{Q_k^3}\,
	\to\, \int_\Omega \frac{\det D^2u}{Q^3} \,=\, \int_\Omega K\, Q \,dx,
\end{align*}
since $\frac{\det D^2u_k}{Q_k^3}$ converges pointwise almost everywhere and since by $Q_k\geq 1$ we obtain that $|D^2u_k|^2$ is a $L^1$-convergent 
sequence of dominating functions. This yields \eqref{eq:Kbound} in the general case.

Since  (\ref{eq:Abound}) holds for $C^2$-functions we infer that
\begin{align*}
\int_{\Omega} | A_k |_{g_k}^2 Q_k \, dx  \leq  4 W_0(u_k) + 2 \bigl( \| \varphi \|_{W^{2,1}(\partial\Omega)}+ \| \kappa \|_{L^1(\partial\Omega)} \bigr) 
- 4 \pi \chi(\Omega) 
\end{align*}
and, with similar arguments as above, 
passing to the limit yields the result.
\end{proof}

%
\section{Sequences of graphs with bounded Willmore energy}
\label{sec:bounds}

\subsection{Area and diameter bounds}

The following celebrated diameter estimate of Leon Simon~\cite{Simon} is the starting point
of our reasoning.
\begin{theorem}[Lemma 1.2 in \cite{Simon}] \label{thm:diam}
 Let $\Gamma\subset \mathbb{R}^n$ be a smooth connected and compact surface 
with boundary.
Then there exists a constant $C$ which only depends on $n$ such that
\begin{displaymath}
\operatorname{diam}(\Gamma) \leq C \Bigl( \int_{\Gamma} | A|_g \, dS+ \sum_{j} \operatorname{diam}(\Gamma_j) \Bigr),
\end{displaymath}
where $\Gamma_j$ are the connected components of $\partial \Gamma$.
\end{theorem}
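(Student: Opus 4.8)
The plan is to prove this via the monotonicity formula for two-dimensional surfaces, whose engine is the first-variation (tangential divergence) identity. For a vector field $X$ on $\R^n$ and a smooth compact surface $\Gamma$ with boundary one has
\begin{equation*}
\int_\Gamma \dive_\Gamma X\,dS \,=\, -\int_\Gamma \vec H\cdot X\,dS + \int_{\partial\Gamma} X\cdot\eta\,ds,
\end{equation*}
where $\vec H$ is the mean curvature vector (with $|\vec H|\le\sqrt 2\,|A|_g$) and $\eta$ the outward unit conormal. First I would insert the radial field $X(x)=(x-x_0)\,\phi(|x-x_0|)$ for a cut-off $\phi$ supported in $[0,\rho]$. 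Using $\dive_\Gamma(x-x_0)=2$ and $(x-x_0)\cdot\nabla_\Gamma r = r - |P^\perp(x-x_0)|^2/r$, where $P^\perp(x-x_0)$ is the normal component of $x-x_0$, this turns the identity into a differential inequality for the area ratio $\rho\mapsto \rho^{-2}\mathcal H^2(\Gamma\cap B_\rho(x_0))$. Crucially, the curvature term enters as $\int_{\Gamma\cap B_\rho}\vec H\cdot(x-x_0)\phi\,dS$, which since $|x-x_0|\le\rho$ is controlled in $L^1$ by $\rho\int_{\Gamma\cap B_\rho}|A|_g\,dS$; this is exactly why an $L^1$ bound on $|A|_g$ (rather than an $L^2$ bound on the mean curvature) suffices.

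Next I would extract the basic consequence. Letting the inner radius tend to $0$ at an interior point $x_0\in\Gamma\setminus\partial\Gamma$, and using that a smooth surface has area density $\pi$ there, the integrated monotonicity yields a lower area bound $\mathcal H^2(\Gamma\cap B_\rho(x_0))\ge c\,\rho^2$ valid on every scale $\rho$ on which the accumulated curvature $\int_{\Gamma\cap B_\rho(x_0)}|A|_g\,dS$ and the boundary contribution stay below a dimensional threshold. Geometrically this says that $\Gamma$ cannot pass through a sphere $\partial B_\rho(x_0)$ with an arbitrarily thin finger unless it pays for it in curvature.

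The diameter bound then follows by combining this lower area bound with the connectedness of $\Gamma$. I would take $p,q\in\Gamma$ with $|p-q|=\operatorname{diam}(\Gamma)$ and join them by a path inside $\Gamma$; by connectedness this path must cross every sphere $\partial B_\rho(p)$ with $0<\rho<\operatorname{diam}(\Gamma)$. Covering the path by balls centred on $\Gamma$ and invoking the lower area bound on each ball whose local curvature is small, one converts the spanned distance into a sum of radii; the finiteness of $\int_\Gamma|A|_g\,dS$ bounds the number of scales at which the density estimate may degenerate, and the contribution of balls meeting $\partial\Gamma$ is absorbed into $\sum_j\operatorname{diam}(\Gamma_j)$. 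The geometry behind the two terms is transparent: a long thin tubular piece has $\int_\Gamma|A|_g\,dS$ comparable to its length, whereas a nearly flat elongated piece carries little curvature but has boundary components of correspondingly large diameter.

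The step I expect to be the main obstacle is the quantitative chaining together with the boundary bookkeeping. The conormal term $\int_{\partial\Gamma}X\cdot\eta\,ds$ in the first-variation identity does not vanish, and on balls meeting $\partial\Gamma$ it must be estimated against the diameters of the boundary components they touch; this is precisely the route by which $\sum_j\operatorname{diam}(\Gamma_j)$ enters, and it forces a careful separation of interior scales from boundary scales. Keeping the final constant dependent only on $n$, and ensuring that the finitely many high-curvature balls cannot interrupt the chain spanning $p$ to $q$, is the delicate accounting on which the whole estimate rests.
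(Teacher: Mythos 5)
First, a point of reference: the paper offers no proof of this statement. It is quoted as Lemma~1.2 of Simon's paper \cite{Simon}, and the only proof-related remark occurs later, in the proof of Theorem~\ref{thm:aprioribounds}, where the authors note that an inspection of Simon's argument (carried out in \cite{Gulyak}) shows the estimate applies to graphs. The mechanism of Simon's proof is quite different from what you propose: one slices $\Gamma$ by the level sets of the height function $h=\langle \cdot,e\rangle$ with $e=(q-p)/|q-p|$ for a diameter-realising pair $p,q$. For almost every value $t$ outside the projection $h(\partial\Gamma)$ (a set of measure at most $\sum_j\operatorname{diam}\Gamma_j$, since $h$ is $1$-Lipschitz) the slice $\Gamma\cap h^{-1}(t)$ is a nonempty closed $1$-manifold; Fenchel's theorem gives total curvature at least $2\pi$ for each of its circles, the curvature in $\R^n$ of a level curve of the \emph{linear} function $h$ is pointwise bounded by $C\,|A|_g/|\nabla_\Gamma h|$, and the coarea formula converts the integral over $t$ into $C\int_\Gamma |A|_g\,dS$. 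No monotonicity formula and no density estimates enter.

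Your route contains a step that is not merely unproven but false: the asserted lower area bound $\Ha^2(\Gamma\cap B_\rho(x_0))\ge c\rho^2$ ``on every scale on which $\int_{\Gamma\cap B_\rho(x_0)}|A|_g\,dS$ stays below a dimensional threshold''. Take for $\Gamma$ a capped cylinder of radius $\eps$ and let $x_0$ lie on the cylindrical part. For $\eps\ll\rho$ one has $\int_{\Gamma\cap B_\rho(x_0)}|A|_g\,dS\approx 4\pi\rho$, which is below any fixed threshold once $\rho$ is small, while $\Ha^2(\Gamma\cap B_\rho(x_0))\approx 4\pi\eps\rho=o(\rho^2)$. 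Propagating the density $\pi$ from scale $0$ to scale $\rho$ genuinely requires the $L^2$ norm of the curvature: integrating the differential inequality for $\rho^{-2}\Ha^2(\Gamma\cap B_\rho)$ produces the term $\int_{\Gamma\cap B_\rho}|x-x_0|^{-1}|\vec H|\,dS$, which can only be absorbed via Cauchy--Schwarz against $\int H^2$; the example shows there is no $L^1$ substitute. Moreover, even granting such a density bound, the chaining you describe converts the diameter into $\sum_i\rho_i$ with $\sum_i\rho_i^2\le C\,\Ha^2(\Gamma)$, i.e.\ into an \emph{area} bound rather than a bound by $\int_\Gamma|A|_g\,dS$, and the finitely many ``bad'' balls where the threshold fails have uncontrolled radii and cannot simply be discarded. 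At best this strategy reproduces an estimate of the form $\operatorname{diam}\Gamma\le C\bigl(\Ha^2(\Gamma)\int_\Gamma H^2\,dS\bigr)^{1/2}+\dots$ (Simon's Lemma~1.1), not the stated $L^1$ bound; to obtain the latter you need the slicing--Fenchel argument sketched above.
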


From now on we always work in $\mathbb{R}^2$.
The following result follows from the preceding estimate and is the key for establishing a-priori bounds on  sequences
which are bounded with respect to the  $W_0$-- or  $W_\gamma$--functional.

\begin{theorem}  \label{thm:aprioribounds}
Suppose that $u \in H^2(\Omega)$ satisfies $u-\varphi \in H^1_0(\Omega)$ Then there exists a constant $C$ that only depends
on $\Omega$ and $\Vert \varphi \Vert_{W^{2,1}(\partial \Omega)}$   such that
\begin{equation}  \label{eq:ubounds}
\displaystyle 
\sup_{x \in \Omega} | u(x)| + \int_{\Omega} Q \, dx \leq C \bigl( W_0(u)^2 +1 \bigr).
\end{equation}
\end{theorem}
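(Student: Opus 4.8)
We want to bound the sup-norm and the area $\int_\Omega Q\,dx$ by $C(W_0(u)^2+1)$. The two key inputs available are: Simon's diameter estimate (Theorem~\ref{thm:diam}) applied to $\graph(u)$, and Lemma~\ref{lemma:Abounds}, which controls $\int_\Omega |A|_g^2 Q\,dx$ linearly by $W_0(u)$ plus data-dependent constants. By Cauchy--Schwarz on the surface,
$$
\int_{\graph(u)} |A|_g\,dS \,\le\, \Bigl(\int_\Omega |A|_g^2\,Q\,dx\Bigr)^{1/2}\Bigl(\int_\Omega Q\,dx\Bigr)^{1/2},
$$
so Simon's estimate will give diameter in terms of $\sqrt{W_0(u)+C}$ times $\sqrt{\text{area}}$. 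The challenge is that Simon's bound controls the \emph{intrinsic} diameter of $\graph(u)$, which dominates the vertical extent, hence $\sup|u|$; but it does so multiplied by $\sqrt{\text{area}}$, and the area itself must also be bounded, so the estimates are coupled.

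\textbf{The plan.} First I would reduce to $C^2$ functions by approximation (as in the proof of Lemma~\ref{lemma:Abounds}), so that $\graph(u)$ is a genuine smooth compact surface with boundary and Theorem~\ref{thm:diam} applies; I expect the bounds to pass to the $H^2$ limit by lower semicontinuity of the relevant quantities and $H^2$-convergence. For a $C^2$ function, the boundary $\partial\graph(u)$ lives over $\partial\Omega$ at height $\varphi$, so $\sum_j\operatorname{diam}(\Gamma_j)$ is bounded purely in terms of $\Omega$ and $\|\varphi\|_{C^0(\partial\Omega)}\le C\|\varphi\|_{W^{2,1}(\partial\Omega)}$ (using a Sobolev embedding on the one-dimensional boundary). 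Combining Simon's estimate with the Cauchy--Schwarz bound above and Lemma~\ref{lemma:Abounds} yields
$$
\operatorname{diam}(\graph(u)) \,\le\, C\Bigl( (W_0(u)+1)^{1/2}\,\bigl(\textstyle\int_\Omega Q\,dx\bigr)^{1/2} + 1\Bigr).
$$
Since $\sup_{x}|u(x)| \le \operatorname{diam}(\graph(u)) + \|\varphi\|_{C^0}$ (the graph connects any interior point to the boundary, where heights are controlled), this controls the sup-norm by the same right-hand side.

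\textbf{Closing the area estimate.} The main obstacle is bounding $\int_\Omega Q\,dx$, since $Q=\sqrt{1+|\nabla u|^2}$ can be large where the graph is steep, and $Q$ appears on \emph{both} sides once we insert the diameter bound. The idea is to bound the area geometrically: the horizontal extent is fixed ($\operatorname{diam}\Omega$ depends only on $\Omega$), while the vertical extent is $\le \sup|u|+\text{const}$, which we have just bounded by the diameter. One expects an estimate of the form
$$
\int_\Omega Q\,dx \,\le\, C\bigl(|\Omega| + \operatorname{diam}(\graph(u))\cdot|\Omega|^{1/2} + \dots\bigr),
$$
or more robustly, one controls the area of $\graph(u)$ by projecting onto the two coordinate planes containing the vertical direction, so that area $\lesssim |\Omega| + \operatorname{diam}(\Omega)\cdot\operatorname{osc}(u) \le C(1+\sup|u|)$. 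Feeding this back, let $D:=\operatorname{diam}(\graph(u))$ and $a:=\int_\Omega Q\,dx$; the two inequalities read schematically $D\le C((W_0+1)^{1/2}a^{1/2}+1)$ and $a\le C(1+D)$. Substituting gives $D\le C((W_0+1)^{1/2}(1+D)^{1/2}+1)$, and a Young's-inequality absorption (writing $(W_0+1)^{1/2}D^{1/2}\le \tfrac12 D + \tfrac{C}{2}(W_0+1)$) yields $D\le C(W_0(u)+1)$, whence $a\le C(W_0(u)+1)$ as well. I would then only need the claimed $W_0(u)^2$ rather than the linear bound I obtain — so either the stated power is not sharp, or, more likely, the correct absorption for $\sup|u|$ forces a squaring: if $\sup|u|\le C((W_0+1)^{1/2}a^{1/2}+1)$ and $a\le C(1+\sup|u|)$, one gets $\sup|u|\le C(W_0+1)$ but the intermediate step through $a$ and re-substitution can produce the weaker $W_0^2$ bound stated, which is perfectly acceptable. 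The delicate point throughout is the geometric area bound and making the absorption rigorous, and then verifying that all inequalities survive the $C^2\to H^2$ approximation.
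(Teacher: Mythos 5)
The first half of your proposal matches the paper's argument: approximate by $C^2$ functions, apply Simon's diameter estimate to $\graph(u)$, bound $\int_{\graph(u)}|A|_g\,dS$ by Cauchy--Schwarz and Lemma~\ref{lemma:Abounds}, and note that the boundary diameter and the heights on $\partial\graph(u)$ are controlled by the data. This correctly yields $\sup_\Omega|u|\le C\bigl((W_0(u)+1)^{1/2}(\int_\Omega Q\,dx)^{1/2}+1\bigr)$, exactly as in \eqref{eq:maxu}.

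The gap is in your closing of the area estimate. The inequality you propose, $\int_\Omega Q\,dx\lesssim |\Omega|+\operatorname{diam}(\Omega)\cdot\operatorname{osc}(u)$, is false: $\int_\Omega Q\,dx\le |\Omega|+\int_\Omega|\nabla u|\,dx$, and the total variation of $u$ is not controlled by its oscillation (take $u_\eps(x)=\eps\sin(x^1/\eps^2)$, for which $\operatorname{osc}(u_\eps)\to 0$ while $\int_\Omega|\nabla u_\eps|\to\infty$). The projection heuristic has the inequality backwards --- projections are $1$-Lipschitz, so the areas of the projections are bounded \emph{by} the surface area, and recovering the surface area from them requires counting multiplicity, which is unbounded for steep or oscillating graphs. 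The fact that your scheme then yields a \emph{linear} bound $C(W_0+1)$, stronger than the stated $C(W_0^2+1)$, is the symptom of this over-strong intermediate step. The paper instead closes the loop by an integration by parts that exploits the graph structure:
\begin{align*}
\int_\Omega uH\,dx \,=\, -\int_\Omega Q\,dx + \int_\Omega \frac{1}{Q}\,dx + \int_{\partial\Omega}\frac{\varphi\,\frac{\partial u}{\partial\nu}}{Q}\,ds,
\end{align*}
which gives $\int_\Omega Q\,dx\le |\Omega|+\|\varphi\|_{L^1(\partial\Omega)}+|\Omega|^{1/2}\sup_\Omega|u|\cdot\bigl(\int_\Omega H^2Q\,dx\bigr)^{1/2}$; inserting the sup-bound and absorbing $\tfrac12\int_\Omega Q\,dx$ via Young's inequality produces $\int_\Omega Q\,dx\le C(W_0(u)^2+1)$, and the quadratic power is genuinely forced by the extra factor $W_0(u)^{1/2}$ coming from the mean curvature in this identity. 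Without this (or an equivalent) curvature-weighted area bound, the coupled system of inequalities does not close.
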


\noindent
Examples~\ref{ex:singular_graph_1} and  \ref{ex:singular_graph_2} below show that it is not
possible to obtain uniform bounds in $W^{1,p}(\Omega)$ for any $1<p\le \infty$.
\begin{proof} 
Let us first assume that $u \in C^2(\overline{\Omega})$ and $u=\varphi$ on $\partial \Omega$.
A careful inspection of
the proof of  \cite[Lemma 1.2]{Simon} shows that the bound in 
Theorem~\ref{thm:diam} holds for $\graph(u)$ (see\cite{Gulyak}) so that
\begin{equation} \label{diamu}
\displaystyle
\operatorname{diam}(\graph(u))  \leq C \Bigl( \int_{\graph(u)} | A|_g \, dS+  \operatorname{diam}(\partial \graph(u)) \Bigr).
\end{equation}
We note that
\begin{displaymath}
\operatorname{diam}(\graph(u)) \geq \sup_{x,y \in \overline{\Omega}, x \neq y} | u(x) - u(y)| \geq 
\sup_{x \in \Omega, y \in \partial \Omega} | u(x) - u(y)| \geq
\sup_{x \in \Omega} | u(x)| -
\sup_{x \in \partial \Omega} | \varphi(x)|
\end{displaymath}
while  $\operatorname{diam}(\partial \graph(u)) \leq C\left( 1+  \Vert \varphi \Vert_{C^0(\partial \Omega)}\right)$ with a constant that depends on 
$\operatorname{diam}(\Omega)$. Hence we deduce  from (\ref{eq:Abound}) and (\ref{diamu}) that
\begin{eqnarray}  
\sup_{x \in \Omega} | u(x)|  & \leq &   C \Bigl( \int_{\Omega} | A|_g Q \, dx +  \Vert \varphi \Vert_{C^0(\partial \Omega)}  +1 \Bigr)  +
\Vert \varphi \Vert_{C^0(\partial \Omega)} \nonumber  \\
& \leq &  C \Bigl( \bigl( \int_{\Omega} | A |_g^2 Q \, dx \bigr)^{{1}/{2}} \bigl( \int_{\Omega} Q \, dx \bigr)^{{1}/{2}}
+ \Vert \varphi \Vert_{C^0(\partial \Omega)}  +1 \Bigr)  \label{eq:maxu}  \\
& \leq & C \bigl( W_0(u) +1) \bigr)^{ {1}/{2}} \bigl( \int_{\Omega} Q \, dx \bigr)^{{1}/{2}}  + C, \nonumber
\end{eqnarray}
where $C$ depends on the diameter and the topology of $\Omega$, 
$\Vert \varphi \Vert_{W^{2,1}(\partial \Omega)}$  and $\Vert \kappa \Vert_{L^1(\partial \Omega)}$.  

Our next aim is to bound $\int_{\Omega} Q \, dx$. We have
\begin{eqnarray*}
\int_{\Omega} uH \, dx & = & \int_{\Omega} u  \nabla \cdot \Bigl( \frac{ \nabla u}{Q} \Bigr) \, dx =
- \int_{\Omega} \frac{| \nabla u |^2}{Q} \, dx + \int_{\partial \Omega} \frac{u \frac{\partial u}{\partial \nu}}{Q} ds \\
& = & - \int_{\Omega} Q \, dx + \int_{\Omega} \frac{1}{Q} \, dx +  \int_{\partial \Omega} 
\frac{\varphi \frac{\partial u}{\partial \nu}}{Q} ds.
\end{eqnarray*}
This integration by parts is the place where we essentially exploit that the surface $\graph(u)$ is a graph.

Combining this relation with (\ref{eq:maxu}) we deduce
\begin{eqnarray*}
\int_{\Omega} Q \, dx & \leq &  | \Omega | + \Vert \varphi \Vert_{L^1(\partial \Omega)}  
       +  | \Omega |^{1/2}   \sup_{x \in \Omega} | u(x) | \bigl( \int_{\Omega} H^2 Q \, dx \bigr)^{{1}/{2}}  \\
& \leq & C + C \Bigl( \bigl( W_0(u)+1 \bigr)^{{1}/{2}}  \bigl( \int_{\Omega} Q \, dx \bigr)^{{1}/{2}} +1 \Bigr) W_0(u)^{{1}/{2}} \\
& \leq & \frac{1}{2} \int_{\Omega} Q \, dx + C \bigl( W_0(u)^2+1 \bigr).
\end{eqnarray*}
Inserting this estimate into (\ref{eq:maxu}) yields (\ref{eq:ubounds}) for
$u \in C^2(\overline{\Omega}), u=\varphi$ on $\partial \Omega$. The general case is  obtained with the help of an approximation argument as in Lemma \ref{lemma:Abounds}.
\end{proof}

\subsection{Examples: No higher integrability of gradients and singular graphs with finite Willmore energy}\label{sec:no_lower_bound}
In this section we present some illustrative examples. We demonstrate that the Willmore energy of a function $u$ does not 
control any $L^p$-norm, $p>1$, of $\nabla u$. Furthermore, we give examples of functions $u$ that are only in $BV(\Omega)\setminus W^{1,1}(\Omega)$ 
but for which $\graph(u)$ describes a smooth surface. These functions can be approximated in $L^1(\Omega)$ by smooth functions with uniformly bounded Willmore energy. 
In particular, the estimates on diameter and area obtained in Theorem \ref{thm:aprioribounds} are in this sense optimal, and sequences with uniformly bounded Willmore 
energy may $L^1$-converge to limit functions that are not even in $W^{1,1}(\Omega)$.

In order to construct appropriate examples it is well known that $\log\circ \log$
is a good ingredient, see for example \cite{Freh73,HuMe86,Toro94}, and that in particular $H^2(\Omega)\not\hookrightarrow W^{1,\infty} (\Omega)$. 
But we even can show a bit more: In spite of the 
non-homogeneous form of the Willmore energy, we may have unbounded gradients and arbitrarily
small  Willmore energy at the same time.

\begin{example}[\cite{Toro94}]\label{ex:singular_graph_1} 
 Let $\Omega=B:=B_1(0)$ be the unit disk. We consider $u:\overline{B}\to \mathbb{R}$, 
which is smooth in $\overline{B}\setminus \{ 0\}$, satisfies homogeneous Dirichlet boundary conditions 
$u=\partial_\nu u=0$ on $\partial B$,
and $u(x)=x^1 \log(|\log(r)|)$ for $r=|x|$ close to $0$. Then, close to $0$ we have 
\begin{eqnarray*}
 |\nabla u|(x) &=&\left|  \log(|\log(r) |) \right| +O(1),\qquad 
 |D^2u|(x) \,=\, O(\frac{1}{r|\log r|})
\end{eqnarray*}
and therefore $u\in H^2_0 (B) \setminus W^{1,\infty}(B) $.

For $\varepsilon\downarrow 0$  we consider $\varepsilon u$:
$$
H[\varepsilon u] =\varepsilon\frac{\Delta u}{(1+\varepsilon^2 |\nabla u|^2 )^{1/2}}
  -\varepsilon^3\frac{\nabla u \cdot D^2u \cdot (\nabla u)^T}{(1+\varepsilon^2 |\nabla u|^2 )^{3/2}}.
$$
Up to a factor $2$, a majorising function for $H[\varepsilon u] ^2 \sqrt{1+\varepsilon^2 |\nabla u|^2 }$ is
given by
\begin{align*}
&\varepsilon^2\frac{(\Delta u)^2}{(1+\varepsilon^2 |\nabla u|^2 )^{1/2}}
  +\varepsilon^6\frac{|\nabla u|^4 \cdot |D^2 u |^2 }{(1+\varepsilon^2 |\nabla u|^2 )^{5/2}}
\le \varepsilon^2 (\Delta u)^2 + \varepsilon^2\frac{ |D^2 u|^2 }{(1+\varepsilon^2 |\nabla u|^2 )^{1/2}}
\le C  \varepsilon^2 |D^2 u|^2.
\end{align*}
Hence
$$
\lim_{\varepsilon\downarrow 0} W_0(\varepsilon u) =0,
$$
while at the same time 
$$
\forall \varepsilon >0: \sup_{x\in B} \varepsilon |\nabla u (x) | = +\infty.
$$
This means also that even for the trivial Willmore surface $(x,y)\mapsto 0$ we find a minimising 
sequence with unbounded gradients. 
\end{example}


Next we give an example of a function $u\in W^{1,1}(\Omega)\setminus H^2 (\Omega)$ such that $\graph(u)$ is smooth as a surface. In this example the singularity is 
purely analytical, introduced by the specific choice of parametrisation as graph. This example shows further that for $p>1$, no $W^{1,p}$--norm may be
estimated in terms of the Willmore energy.

\begin{example}\label{ex:singular_graph_2}
We choose an odd integer $k\in 2\mathbb{N}+1$ larger than or equal to $3$.
We   consider a nonincreasing function
$h\in C^0([0,2])\cap C^\infty([0,1)\cup(1,2],[-1,1])$ with
$$
h(r)=\left\{\begin{array}{ll}
               1\qquad & \mbox{\ for\ } r\in[0,1/2],\\
              \operatorname{sgn}(1-r) |1-r|^{1/k} & \mbox{\ for\ } r\in[3/4,5/4],\\
             -1\qquad & \mbox{\ for\ } r\in[3/2,2],\\
            \end{array}
\right.
$$
As a curve $r\mapsto (r,h(r))$  in $\mathbb{R}^2$, it is $C^\infty$--smooth,
because close to $1$, $h$ is the inverse of the analytic function $h\mapsto 1-h^k$.
On the other hand,
as a graph, close to $1$ the singularity of $h'$ is of order $ |1-r|^{-1+1/k}$
and the singularity of $h''$ of order $ |1-r|^{-2+1/k}$. This means that $h$ has a weak first 
derivative but not a weak second derivative.

The same applies to the graph of the radially symmetric function 
$u:\overline{B_2(0)}\to [-1,1]$, $u(x^1,x^2)=h(| (x^1,x^2) |)$ and
yields that $u\in W^{1,1}(B_2(0))\setminus H^2 (B_2(0))$. 
We even have that $u\not\in W^{1,k/(k-1)}(B_2(0))$. Observe that $\frac{k}{k-1}$ may become arbitrarily close to $1$.
However, since $\graph(u)$ is 
compact and smooth as a surface, its Willmore energy is well defined and finite.
\end{example}

\begin{example}\label{ex:singular_graph_2a}
It is also possible to introduce in the previous example a vertical piece and to obtain a surface that is not a graph, but that can be approximated by smooth graphs 
with uniformly bounded Willmore energy: Cut the surface in Example~\ref{ex:singular_graph_2}
along the circle where $u$ has  infinite slope and insert there a cylindrical part.
This can certainly be $L^1$-approximated by
a sequence $(u_k)_{k\in\mathbb{N} }\subset \MDir$ with uniformly bounded Willmore energy, but $u\in BV (\Omega)
\setminus W^{1,1} (\Omega)$. Note that the limit of the graph functions is a $BV$-function with a non-vanishing jump part and that at its jump points 
also the absolutely continuous part $\nabla^a u$ of the gradient blows up.
\end{example}

In the next example a function $u\in W^{2,1}(\Omega)\setminus H^2(\Omega)$ is constructed such that its graph has bounded Willmore energy. Here the singularity 
is independent of the parametrisation of $\graph(u)$ and therefore a ``real'' geometric singularity.
\begin{example}\label{ex:singular_graph_3}
 We consider $\Omega=B:=B_1(0)$ and a function $u\in C^0_0 ( {\Omega})\cap C^\infty (\overline{\Omega}\setminus \{0\})$ 
such that for $r=|x|$ close to $0$:
\begin{eqnarray*}
u(x)&=& x^1 \, (-\log r)^{1/2},\\ 
u_{x^1}(x)&=& (-\log r)^{1/2}- \frac{(x^1)^2}{2r^2} (-\log r)^{-1/2}=(-\log r)^{1/2}+O(1),\\ 
u_{x^2}(x)&=& - \frac{x^1\, x^2}{2r^2} (-\log r)^{-1/2}= O(1),\\ 
u_{x^1x^1}(x)&=&-\frac{3x^1}{2r^2}(-\log r)^{-1/2}+\frac{(x^1)^3}{r^4}(-\log r)^{-1/2}
                 - \frac{(x^1)^3}{4 r^4}(-\log r)^{-3/2},\\
u_{x^1x^2}(x)&=&-\frac{x^2}{2r^2}(-\log r)^{-1/2}+\frac{(x^1)^2 x^2}{r^4}(-\log r)^{-1/2}
                 - \frac{(x^1)^2 x^2}{4 r^4}(-\log r)^{-3/2},\\
u_{x^2x^2}(x)&=&-\frac{x^1}{2r^2}(-\log r)^{-1/2}+\frac{x^1(x^2)^2 }{r^4}(-\log r)^{-1/2}
                 - \frac{x^1 (x^2)^2 }{4 r^4}(-\log r)^{-3/2}.
\end{eqnarray*}
Concerning the asymptotic behaviour for $r\downarrow 0$ we have
in view of $H[ u] =\frac{\Delta u}{(1+ |\nabla u|^2 )^{1/2}}
  -\frac{\nabla u \cdot D^2u \cdot (\nabla u)^T}{(1+ |\nabla u|^2 )^{3/2}}$
and  $|H|\le C \frac{|D^2 u|}{Q}$:
\begin{eqnarray}
 |\nabla u | &=& (-\log r)^{1/2}+O(1),\qquad   Q=  (-\log r)^{1/2}+O(1),\nonumber\\ 
|D^2 u|&=& O\left(\frac{1}{r\sqrt{ (-\log r)}}\right),\qquad  |D^2 u|^2= O\left(\frac{1}{r^2 |\log r| }\right),\label{eq:F.2}\\ 
H^2 Q &\le & C \frac{|D^2 u|^2}{Q}=O\left(\frac{1}{r^2 |\log r|^{3/2} }\right)\in L^1 (B_{1/2}(0)).\label{eq:F.3}
\end{eqnarray}
The second derivatives of $u$ are locally integrable around $0$ and so, exist as weak derivatives in the
whole domain $B$, such that we even have $u\in W^{2,1}(B)\subset W^{1,1}(B) $.
Thanks to (\ref{eq:F.3}) we see further  that $W_0(u)<\infty$. However, $u\not\in H^2 (B)$. To this end we show that 
$u_{x^1x^1}\not\in L^2(B)$. We observe that
$$
x\mapsto - \frac{(x^1)^3}{4 r^4}(-\log r)^{-3/2}\in L^2 (B_{1/2}(0)),
$$
while 
$$
x\mapsto\left| - \frac{3x^1}{2r^2}(-\log r)^{-1/2}+\frac{(x^1)^3}{r^4}(-\log r)^{-1/2}\right|^2
=\frac{(x^1)^2}{r^4(-\log r)} \left(\frac{3}{2}-\frac{(x^1)^2}{r^2} \right)^2
\ge \frac{(x^1)^2}{4r^4(-\log r)}.
$$
The latter function is not in $ L^1 (B_{1/2}(0))$. Otherwise, one would have also that
$x\mapsto \frac{(x^2)^2}{4r^4(-\log r)}\in L^1 (B_{1/2}(0))$ and so that
$x\mapsto \frac{(x^1)^2 + (x^2)^2}{4r^4(-\log r)}=\frac{1}{4r^2(-\log r)}\in L^1 (B_{1/2}(0))$,
a contradiction. This shows that (\ref{eq:F.2}) displays the precise asymptotic behaviour of $D^2u$
close to $0$.

Finally, $\graph(u)$ is not a $C^2$--smooth surface because the curvature of the curve $t \mapsto (t ,0,u(t,0))$
is given by
$$
\frac{u_{x^1x^1}(t,0) }{(1+u_{x^1}(t,0)^2)^{3/2}}
=\frac{2-\frac{1}{\log|t|}}{-4t \sqrt{-\log|t|} (-\log|t| -\frac{1}{4\log|t| })^{3/2} }
=\frac{1}{-4t  (\log|t|)^2} \cdot \frac{2-\frac{1}{\log|t|} }{(1+\frac{1}{4(\log|t|)^2})^{3/2} }
$$
and becomes unbounded and so undefined  for $t\downarrow 0$.
\end{example}

\section{Compactness and lower bounds for energy-bounded sequences}
\label{sec:lowersemicontinuous}

When we consider minimising sequences for the Willmore functional of graphs (subject to appropriate boundary conditions), or more generally sequences 
with uniformly bounded Willmore energy Theorem~\ref{thm:aprioribounds} shows that $BV\cap L^\infty$ is a natural space, where uniform bounds hold. 
In particular, such sequences are precompact in $L^1$. For this reason it is useful to study the behaviour of the Willmore functional with respect to $L^1$-convergence. 
However, as the examples of the previous section indicate, limit points need not  remain in $H^2(\Omega)$ and even can have an $L^1$-limit with jump discontinuities, 
which results in vertical parts in the boundary of the corresponding sublevel-sets. This leads to substantial difficulties in the analysis.
Nevertheless,
we derive below some additional (mild) regularity properties and control the Willmore energy of the absolutely continuous part of limit configurations. 

To introduce an appropriate generalised formulation consider for $u\in BV(\Omega)$ the absolutely continuous part $\nabla^a u\in L^1(\Omega)$ of the $\R^2$-valued  
measure $\nabla u$, set $Q^a := \sqrt{1+|\nabla^a u|^2}$ and define the absolutely continuous contribution to the Willmore energy as
\begin{align}
	\Wnv_0(u) \,:=\, 
	\begin{cases}
		\frac{1}{4}\int_\Omega \big(\nabla\cdot \frac{\nabla^a u}{Q^a}\big)^2 Q^a \,dx \quad &\text{ if }\frac{\nabla^a u}{Q^a}\in H(\dive,\Omega) \text { and the integral is finite},\\
		\infty &\text{ else,}
	\end{cases} \label{eq:def-Wnv}
\end{align}
where the space $H(\operatorname{div},\Omega)$ of $L^2(\Omega,\mathbb{R}^2)$--vector fields with weak 
divergence in $L^2(\Omega)$ was introduced in Section \ref{sec:prelim}.

In the next theorem we prove our main lower bound and compactness results. For energy-bounded 
sequences in $H^2(\Omega)$ that satisfy a suitable  boundary condition 
we show that there exists a $L^1$-convergent subsequence. The limit belongs to $BV(\Omega)\cap L^\infty(\Omega)$, 
the absolutely continuous contribution to the Willmore energy $\Wnv_0$ is finite 
and obeys an  estimate from above. In particular, if the limit is already a $W^{1,1}(\Omega)$-function the full Willmore functional is controlled. 
This also shows the $L^1$-lower semicontinuity of $W_0$ in $H^2(\Omega)$ (subject to prescribed   boundary conditions).

\begin{theorem}\label{thm:L1lowersemicontinuity}
Let $(u_k)_{k\in\N}$ be a given sequence in $H^2(\Omega)$ that satisfies $u_k-\varphi \in H^1_0(\Omega)$ for all $k\in\N$ and
\begin{align}
	\liminf_{k\to\infty} W_0(u_k)\,<\,\infty. \label{eq:bound-W0}
\end{align}
Then there exists a function $u\in BV(\Omega)\cap L^\infty(\Omega)$ with $\frac{\nabla^a u}{Q^a}\in H(\dive,\Omega)$ such that after passing to a subsequence
\begin{equation}\label{eq:3.2}
 u_k\to u \text{ in } L^1(\Omega)\quad (k\to\infty)
\end{equation}
and  
\begin{equation}\label{eq:3.3-a}
 \Wnv_0(u) \le \liminf_{k\to\infty}W_0(u_k).
\end{equation}
In particular, if $u\in H^2(\Omega)$ then
\begin{equation}\label{eq:3.3}
 W_0(u) \le \liminf_{k\to\infty}W_0(u_k).
\end{equation}
\end{theorem}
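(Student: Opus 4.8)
The plan is to combine the a-priori bounds of Theorem~\ref{thm:aprioribounds} with suitable auxiliary sequences bounded in $H^1(\Omega)$, and then to identify their limits with the absolutely continuous gradient of $u$. After passing to a subsequence I may assume $W_0(u_k)\le\Lambda<\infty$. Theorem~\ref{thm:aprioribounds} then gives $\sup_\Omega|u_k|+\int_\Omega Q_k\,dx\le C(\Lambda^2+1)$, where $Q_k:=\sqrt{1+|\nabla u_k|^2}$. Since $\int_\Omega|\nabla u_k|\le\int_\Omega Q_k$, the sequence is bounded in $BV(\Omega)\cap L^\infty(\Omega)$, so by compactness of $BV(\Omega)\hookrightarrow L^1(\Omega)$ a further subsequence satisfies $u_k\to u$ in $L^1(\Omega)$ with $u\in BV(\Omega)\cap L^\infty(\Omega)$, which is \eqref{eq:3.2}. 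In particular $\nabla u_k\,\LL^2\rightharpoonup\nabla u$ as $\R^2$-valued measures.

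Next I introduce the auxiliary quantities $q_k:=Q_k^{-5/2}$ and $v_k:=q_k\nabla u_k$, chosen so that the second-fundamental-form bound controls their gradients. Indeed, Lemma~\ref{lemma:Abounds} together with the lower bound in \eqref{eq:Hessian_vs_secondff} yields $\int_\Omega Q_k^{-5}|D^2u_k|^2\,dx\le\int_\Omega|A_k|^2_{g_k}Q_k\,dx\le C$. Using $\nabla Q_k=Q_k^{-1}D^2u_k\,\nabla u_k$, hence $|\nabla Q_k|\le|D^2u_k|$, and $|\nabla u_k|\le Q_k$, a direct computation of $\nabla q_k=-\tfrac52 Q_k^{-7/2}\nabla Q_k$ and of $\nabla v_k$ shows, after using $Q_k\ge1$, that both $\|q_k\|_{H^1}$ and $\|v_k\|_{H^1}$ are bounded by $C$. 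By Rellich's theorem I may therefore extract a subsequence with $q_k\to q$ and $v_k\to v$ strongly in $L^2$ and almost everywhere, where $q,v\in H^1(\Omega)$, $0\le q\le1$, $|v|\le1$. The key slick observation is that $\{q=0\}$ is Lebesgue-null: on $\{q=0\}$ one has $Q_k=q_k^{-2/5}\to\infty$ a.e., so by Fatou $\int_{\{q=0\}}\liminf_k Q_k\le\sup_k\int_\Omega Q_k\le C$ forces $|\{q=0\}|=0$. Consequently $q>0$ a.e., $Q_k\to q^{-2/5}<\infty$ a.e., and $\nabla u_k=v_k/q_k\to v/q=:g$ a.e. A standard Egorov argument (a.e.\ convergence together with $\nabla u_k\,\LL^2\rightharpoonup\nabla u$ and the uniform $L^1$-bound) identifies the absolutely continuous part as $\nabla^a u=g=v/q$ a.e.; this is the Lebesgue-a.e.\ form of the relation $v=q\,\nabla u$ announced in the introduction (its measure-theoretic version on the singular set would additionally require the precise representative of $q$ and capacity arguments, but is not needed here).

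With this identification the limit field is controlled. Writing $w_k:=\nabla u_k/Q_k$, so that $H_k:=\nabla\cdot w_k$ and $4W_0(u_k)=\int_\Omega H_k^2 Q_k\,dx$, one checks the algebraic identities $q_k^2+|v_k|^2=Q_k^{-3}$ and $w_k=v_k/\sqrt{q_k^2+|v_k|^2}$, whence $w_k\to v/\sqrt{q^2+|v|^2}=\nabla^a u/Q^a$ a.e., with $Q^a=\sqrt{1+|\nabla^a u|^2}$, and $Q_k\to Q^a$ a.e. Since $Q_k\ge1$ gives $\int_\Omega H_k^2\le4\Lambda$, I have $H_k\rightharpoonup H$ in $L^2(\Omega)$ along a subsequence. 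Testing $\int_\Omega H_k\phi\,dx=-\int_\Omega w_k\cdot\nabla\phi\,dx$ against $\phi\in C_c^\infty(\Omega)$ and passing to the limit (using $w_k\to\nabla^a u/Q^a$ in $L^2$ by dominated convergence, as $|w_k|\le1$) shows $H=\nabla\cdot\frac{\nabla^a u}{Q^a}\in L^2(\Omega)$, so $\frac{\nabla^a u}{Q^a}\in H(\dive,\Omega)$ as claimed.

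It remains to prove the weighted lower bound \eqref{eq:3.3-a}, which I expect to be the main obstacle since the weight $Q_k$ converges only almost everywhere and is not uniformly bounded. I handle this by truncation: for $M>0$ set $Q_k^M:=\min(Q_k,M)$, so $\sqrt{Q_k^M}\to\sqrt{(Q^a)^M}$ a.e.\ and in every $L^r$ ($r<\infty$) by dominated convergence. Then $\sqrt{Q_k^M}\,H_k\rightharpoonup\sqrt{(Q^a)^M}\,H$ weakly in $L^2$ (a weak–strong product), and weak lower semicontinuity of the $L^2$-norm gives $\int_\Omega (Q^a)^M H^2\,dx\le\liminf_k\int_\Omega Q_k^M H_k^2\,dx\le\liminf_k\int_\Omega Q_k H_k^2\,dx=4\liminf_k W_0(u_k)$. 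Letting $M\to\infty$ and using monotone convergence yields $\int_\Omega Q^a H^2\,dx\le 4\liminf_k W_0(u_k)$, which is precisely $\Wnv_0(u)\le\liminf_k W_0(u_k)$ by \eqref{eq:def-Wnv}. Finally, if $u\in H^2(\Omega)$ then $\nabla u$ has no singular part, so $\nabla^a u=\nabla u$, $Q^a=Q$ and $H=\nabla\cdot\frac{\nabla u}{Q}$ coincides with the classical mean curvature \eqref{eq:Hdef}; hence $\Wnv_0(u)=W_0(u)$ and \eqref{eq:3.3} follows from \eqref{eq:3.3-a}.
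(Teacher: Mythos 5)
Your proposal follows the paper's strategy almost verbatim (same a-priori bounds, same auxiliary functions $q_k=Q_k^{-5/2}$, $v_k=q_k\nabla u_k$, same Fatou argument for $q>0$ a.e., same weak-$L^2$ treatment of $H_k$) up to the one step that carries the real difficulty, and it is exactly there that you have a genuine gap: the identification $\nabla^a u = v/q$ almost everywhere. You justify it by ``a standard Egorov argument'' from (i) $\nabla u_k\to v/q$ a.e., (ii) $\nabla u_k\,\LL^2\rightharpoonup\nabla u$ as measures, and (iii) the uniform $L^1$-bound. That implication is false in general: an $L^1$-bounded sequence converging a.e.\ can concentrate all of its mass on a Lebesgue-null exceptional set and converge weakly-$*$ to a measure whose absolutely continuous part has nothing to do with the pointwise limit. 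For instance, with $E_k$ a union of $k$ intervals of length $k^{-3}$ equidistributed in $(0,1)$ and $f_k=k^2\chi_{E_k}$ one has $f_k\to 0$ a.e.\ (Borel--Cantelli), $\|f_k\|_{L^1}=1$, yet $f_k\,\LL^1\rightharpoonup\LL^1$, whose absolutely continuous part is $1$, not $0$. Egorov gives uniform convergence only off a set of small \emph{Lebesgue} measure, and since $(\nabla u_k)_k$ is not equi-integrable, the mass sitting on that small set is uncontrolled and can redistribute in the limit.

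What rules this out here --- and what the paper spends the bulk of its proof on --- is the $H^1$-structure of $q_k$ and $v_k$, used in two ways you omit. First one proves the measure identity $v\,\LL^2=q\,\nabla u$ on $\Omega$ by integration by parts, $\int_\Omega\eta\cdot v_k\,dx=-\int_\Omega u_k\,\nabla\cdot(q_k\eta)\,dx$, passing to the limit via the weak $H^1$-convergence of $q_k$ and then showing $-\int_\Omega u\,\nabla\cdot(q\eta)\,dx=\int_\Omega\eta q\cdot d(\nabla u)$; this last step requires approximating $\eta q$ quasi-uniformly off sets of small $\tfrac32$-capacity and the fact that $|\nabla u|$ does not charge capacity-null sets (\cite[Lemma 3.76]{AmFP00}). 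Second, one must decompose the measure $q\,\nabla u$ into $q\nabla^a u\,\LL^2+q\nabla^s u$, which the paper does through a Lebesgue-point computation, obtaining both $v=q\nabla^a u$ a.e.\ and $q\nabla^s u=0$. Only then is the a.e.\ limit $v/q$ of $\nabla u_k$ known to be $\nabla^a u$, and everything downstream (that $\nabla^a u/Q^a\in H(\dive,\Omega)$, that $Q_k\to Q^a$ a.e., and the bound \eqref{eq:3.3-a}) depends on it. The rest of your argument is sound; in particular your truncation $Q_k^M=\min(Q_k,M)$ followed by monotone convergence is a valid, mildly different route to \eqref{eq:3.3-a} (the paper instead passes to the limit directly, using $\sqrt{Q^a}\in L^2(\Omega)$ as a dominating function), but it cannot compensate for the missing identification.
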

Moreover, in a sense made precise in Proposition~\ref{prop:Wgamma}, it is proved there that the trace of $u$ on $\partial\Omega$ satisfies 
$\Ha^{1}$-almost everywhere on $\{(Q^a)^{-1}>0\}\cap\partial\Omega$ the  boundary condition $u=\varphi$.

\medskip
A related lower semicontinuity result in the context of integral currents
was proved by Sch\"atzle \cite{Schaetzle2}. The area bound 
required there is in our case satisfied thanks to Theorem~\ref{thm:aprioribounds}, and therefore \eqref{eq:3.3} could also be deduced (with some additional work) 
from \cite[Theorem 5.1]{Schaetzle2}.
However, we prefer to give a self-consistent proof within the context of graphs, with the advantage that more elementary arguments apply, compared to Sch\"atzle's approach.

\begin{proof} 
There exists a subsequence, again denoted by
$(u_k)_{k \in \mathbb{N}}$,  and a constant $M \geq 0$ such that 
\begin{equation} \label{eq:3.1}
\displaystyle
W_0(u_k) \rightarrow \liminf_{k\to\infty}W_0(u_k) \quad \mbox{ and } \quad W_0(u_k) \leq M\,\text{ for all } k \in \mathbb{N}.
\end{equation}
Theorem \ref{thm:aprioribounds} implies that
\begin{equation}\label{eq:3.4}
 \| u_k \|_{C^0 (\overline\Omega)}\le C, \qquad \int_{\Omega} Q_k\, \, dx \le C \qquad \mbox{uniformly in } k \in \mathbb{N}.
\end{equation}
By the compactness theorem in BV \cite[Theorem 3.23]{AmFP00} we deduce that there exists a function $u\in BV(\Omega)$ and a subsequence $k\to\infty$ 
such that (\ref{eq:3.2}) holds. By \eqref{eq:3.4} we also have $u\in L^\infty(\Omega)$ and, possibly after passing to another subsequence, we obtain
\begin{equation}\label{eq:3.4.a}
 u_k \to u \mbox{\ strongly in } L^p(\Omega)\text{ for any }1\le p<\infty,\quad\text{ and a.e.~in }\Omega.
\end{equation}
Furthermore,  we deduce from (\ref{eq:Hessian_vs_secondff}), (\ref{eq:Abound}), and (\ref{eq:3.1}) that
\begin{equation}\label{eq:3.5}
 \int_{\Omega} \frac{| D^2 u_k |^2}{Q_k^5}\,dx\le  \int_{\Omega} | A_k |_g^2 Q_k\, \, dx \le C.
\end{equation}
Here we made use of the boundary condition $u_k-\varphi  \in H^1_0(\Omega)$.
We consider the bounded mappings
$$
q_k:=\frac{1}{Q_k^{5/2} }=\frac{1}{(1+| \nabla u_k |^2 )^{5/4} },\qquad   
v_k :=q_k \nabla u_k =\frac{1}{Q_k^{5/2} } \nabla u_k =\frac{1}{(1+| \nabla u_k |^2 )^{5/4} }\nabla u_k.
$$
We have
\begin{eqnarray*}
 \partial_i q_k &=& -\frac{5}{2} \sum_{\ell=1}^2\frac{\partial_\ell u_k\, \partial_i\partial_\ell u_k}{(1+| \nabla u_k |^2 )^{9/4} }
=O(\frac{| D^2 u_k |}{(1+| \nabla u_k |^2 )^{7/4} })= O(\frac{| D^2 u_k |}{Q_k^{7/2} }),\\
 \partial_i v_k &=&( \partial_i q_k)  \nabla u_k+ q_k\,  \partial_i  \nabla u_k = O(\frac{| D^2 u_k |}{Q_k^{5/2} }).
\end{eqnarray*}
By  (\ref{eq:3.5}) one has uniform boundedness of $(q_k)_{k\in\mathbb{N}}$ and  $(v_k)_{k\in\mathbb{N}}$ in $H^1(\Omega)$. Hence one
finds $q,v\in H^1(\Omega)$ such that, after passing to a subsequence,
\begin{align}
	q_k &\rightharpoonup q ,\quad v_k \rightharpoonup v \quad\mbox{\ in\ } H^1(\Omega),\label{eq:w-conv-qk}\\
	q_k &\to q, \quad v_k \to v \quad\mbox{\ in any\ } L^p(\Omega), 1\leq p<\infty, \mbox{\ and almost everywhere in }\Omega.	\label{eq:ptw-conv-qk}
\end{align}
From now on we fix precise representatives for $q_k,v_k$, $k\in\N$ and $q,v$. By \cite[Theorem 7, Section 1.C]{Evan90} 
there exists a  subsequence $k\to\infty$ and
for every $m\in\N$  an open set $E_m\subset\Omega$ with $\capa_{3/2}(E_m)\leq \frac{1}{m}$ (the choice of $\frac{3}{2}$ here and 
in the following is for convenience, any exponent in $(1,2)$ instead of $\frac{3}{2}$ works as well) such that
\begin{align}
	q_k \to q ,\quad v_k \to v \quad\mbox{\ uniformly in\ } \Omega\setminus E_m. \label{eq:qkvk-Cap}
\end{align}
This yields for $E=\cap_{m\geq 1} E_m$ that 
\begin{align}
	q_k &\to q, \quad v_k \to v \quad\text{ pointwise in } \Omega\setminus E.	\label{eq:qe-conv-qk}
\end{align}
Since $\capa_{3/2}(E)\le \capa_{3/2}( E_m)$ for all $m\in\N$ by \cite[Remark in Section 4.7.1]{EvGa92} we conclude that $E$ has $\frac{3}{2}$-capacity zero and thus satisfies 
$\Ha^1(E)=0$, see \cite[Theorem 4.7.4]{EvGa92}.

Due to the uniform area bound \eqref{eq:3.4}, \eqref{eq:qe-conv-qk} and Fatou's Lemma we deduce that
\begin{align}
	C\,\geq\, \liminf_{k\to\infty} \int_\Omega Q_k \,dx\,\geq\,  \int_\Omega \liminf_{k\to\infty}(q_k)^{-2/5}\,dx \,=\, \int_\Omega q^{-2/5}\,dx. \label{eq:Qa-bound-pre}
\end{align}
This shows that $q^{-2/5}\in L^1(\Omega)$, in particular
\begin{equation}\label{eq:3.6}
 q>0 \mbox{\ almost everywhere in\ }\Omega.
\end{equation}

We next claim that
\begin{equation}\label{eq:3.7-new}
	v\,\LL^2 \,=\, q\nabla u\quad\text{ as Radon measures on } \Omega.
\end{equation}
To prove this, consider any $\eta \in C^\infty_0 (\Omega, \mathbb{R}^2)$. Making use of (\ref{eq:3.4.a}) and so in particular
of the $C^0$-bounds for $u_k$ we obtain:
\begin{eqnarray}
 \int_\Omega \eta \cdot v\, dx &=&  \int_\Omega \eta \cdot v_k\, dx +o(1)
= \int_\Omega q_k \eta \cdot\nabla u_k \, dx +o(1) \notag\\
&=& - \int_\Omega (\operatorname{div}\eta ) \underbrace{q_k }_{\to q \mbox{\scriptsize\ in\ }L^2}\underbrace{u_k }_{\to u \mbox{\scriptsize\ in\ }L^2}\, dx 
- \int_\Omega  \underbrace{(\eta \cdot \nabla q_k) }_{O(1) \mbox{\scriptsize\ in\ }L^2} \cdot \underbrace{(u_k-u)}_{\to 0 \mbox{\scriptsize\ in\ }L^2}\, dx  \notag\\
&&- \int_\Omega (\eta u)\cdot \underbrace{\nabla q_k }_{\rightharpoonup \nabla q \mbox{\scriptsize\ in\ }L^2}\, dx  +o(1) \notag\\
&=&-\int_\Omega( \operatorname{div}\eta ) qu\, dx -\int_\Omega(\eta \cdot \nabla q) \, u\, dx +o(1)  \notag\\
 &=& -\int_\Omega u\nabla\cdot (q\eta)  \, dx  +o(1). \label{eq:3.7-1}
\end{eqnarray}
For the right-hand side we claim that
\begin{align}
	-\int_\Omega u \nabla\cdot \big(\eta q\big) \,dx \,=\, \int_\Omega \eta q\cdot d(\nabla u).  \label{eq:3.7-2}
\end{align}
In fact we can approximate $\eta q$ strongly in $H^1_0(\Omega)$ by a smooth sequence $(w_\ell)_{\ell\in\N}$ that is uniformly bounded in $C^0(\overline{\Omega})$.
As above we deduce that there exists a set $\tilde{E}\subset\Omega$ with $\frac{3}{2}$-capacity zero and thus (see above) with $\Ha^1(\tilde{E})=0$, such that 
$w_\ell\to \eta q$ everywhere in $\Omega\setminus \tilde{E}$. Since $|\nabla u|(\tilde{E})=0$ by \cite[Lemma 3.76]{AmFP00} we have $w_\ell\to \eta q$ in 
$|\nabla u|$-almost every point and deduce from Lebesgue's dominated convergence theorem that
\begin{align*}
	-\int_\Omega  u \nabla\cdot \big(\eta q\big)\,dx \,=\, -\lim_{\ell\to\infty} \int_\Omega  u\nabla\cdot w_\ell \,dx	\,=\, \lim_{\ell\to\infty}  \int_\Omega w_\ell\cdot d(\nabla u) 
	\,=\, \int_\Omega \eta q\cdot d(\nabla u),
\end{align*}
which proves \eqref{eq:3.7-2}. From \eqref{eq:3.7-1}, \eqref{eq:3.7-2} we deduce \eqref{eq:3.7-new}. We next show 
\begin{align}
	 v \,&=\, q\nabla^a u\quad\text{ almost everywhere in } \Omega, \label{eq:3.7}\\
	 q\nabla^s u  \,&=\, 0\qquad\text{ as Radon measures on }\Omega. \label{eq:nablau-sing}
\end{align}
To prove these properties we first obtain from \cite[Theorem 1.3.5]{Simo83} that for every $k\in\N$ there exists a set $B_k$ with $|B_k|=0$ such that for any 
$x_0\in \{|\nabla^a u|\leq k\}\setminus B_k$
\begin{align*}
	\lim_{r\downarrow 0} \frac{|B_r(x_0)\cap\{|\nabla^a u|>k\}|}{|B_r(x_0)|} \,=\, 0.
\end{align*}
This implies that for almost every $x_0\in\Omega$
\begin{align}
	\lim_{r\downarrow 0} \frac{|B_r(x_0)\cap\{|\nabla^a u|>|\nabla^a u|(x_0)+1\}|}{|B_r(x_0)|} \,=\, 0. \label{eq:40-1}
\end{align}
Next we deduce from \cite[Corollary 5.11]{Magg12} and \cite[Theorem 1.6.3, Corollary 1.7.1]{EvGa92} that almost all $x_0\in\Omega$ are Lebesgue points of both 
$\nabla^a u$ and $q$, and that $\nabla u$ is absolutely continuous in almost every $x_0$, i.e.
\begin{align}
	\lim_{r\downarrow 0} \fint_{B_r(x_0)} d(\nabla u) \,&=\, \nabla^a u(x_0)\in\R^2, \label{eq:40-21}\\
	\lim_{r\downarrow 0} \frac{|\nabla^s u|(B_r(x_0))}{|B_r(x_0)|} \,&=\, 0, \label{eq:40-2}\\
	\lim_{r\downarrow 0} \fint_{B_r(x_0)} |\nabla^a u(x) -\nabla^a u(x_0)|\,dx \,&=\, 0, \label{eq:40-3}\\
	\lim_{r\downarrow 0} \fint_{B_r(x_0)} |q(x) -q(x_0)|\,dx \,&=\, 0. \label{eq:40-4}
\end{align}
To prove \eqref{eq:3.7} we therefore can restrict ourselves to $x_0\in\Omega$ such that \eqref{eq:40-1}-\eqref{eq:40-4} are satisfied. We then compute
\begin{align*}
	&\Big| \fint_{B_r(x_0)} q\,d(\nabla u) - q(x_0)\fint_{B_r(x_0)}d(\nabla u) \Big|\\
	\leq\,
	&\fint_{B_r(x_0)} |q(x)-q(x_0)|\,d|\nabla u|(x)\\
	\leq\, &\fint_{B_r(x_0)} |q(x)-q(x_0)||\nabla^a u|(x)\,dx + \fint_{B_r(x_0)}d|\nabla^s u| \\
	\leq\, &(|\nabla^a u|(x_0)+1)\fint_{B_r(x_0)} |q(x)-q(x_0)|\,dx \\
        &\quad + \frac{1}{ |B_r(x_0)| }\int_{B_r(x_0)\cap\{|\nabla^a u|>|\nabla^a u|(x_0)+1\}}|\nabla^a u(x)-\nabla^a u(x_0)|\,dx\\
	&\quad + \frac{1}{ |B_r(x_0)| }\int_{B_r(x_0)\cap\{|\nabla^a u|>|\nabla^a u|(x_0)+1\}}|\nabla^a u(x_0)|\,dx+ \fint_{B_r(x_0)}d|\nabla^s u|\\
	\to\,& 0\quad\text{ for }r\downarrow 0
\end{align*}
by \eqref{eq:40-4},\eqref{eq:40-3},\eqref{eq:40-1} and \eqref{eq:40-2}. This shows $(q\nabla u)^a(x_0)=q(x_0)\nabla^a u(x_0)$ and implies by \eqref{eq:3.7-new} 
that almost everywhere in $\Omega$
\begin{align*}
	v \,=\, (q\nabla u)^a \,=\, q \nabla^a u
\end{align*}
holds, which gives \eqref{eq:3.7}. We further deduce from \eqref{eq:3.7-new} that
\begin{align*}
	0\,=\, (q\nabla u)^s \,=\, q\nabla u - (q\nabla u)^a \,=\, q\nabla u - q\nabla^a u \,=\, q\nabla^s u,
\end{align*}
and therefore \eqref{eq:nablau-sing} holds.

Finally, we claim that there exists a set $E_1$ of $\frac{3}{2}$-capacity zero such that $\nabla^a u$ has an approximately continuous representative on $\{q>0\}\setminus E_1$ 
that satisfies
\begin{equation}\label{eq:3.7-new2}
	\nabla^a u \,=\, q^{-1}v \quad\text{ in } \{q>0\}\setminus E_1.
\end{equation}
In fact, by \cite[Theorem 4.8.1]{EvGa92} first there exists a set $E_1$ of $\frac{3}{2}$-capacity zero such that $\Omega\setminus E_1$ only consists of Lebesgue points of both 
$q$ and $v$. Therefore $q,v$ are approximately continuous in $\Omega\setminus E_1$ and by \eqref{eq:3.7} and the properties of approximate continuity stated in 
Section \ref{sec:prelim} we see that the approximately continuous representative of $\nabla^a u$ is well-defined in $\{q>0\}\setminus E_1$ and that \eqref{eq:3.7-new2} holds.

Enlarging the set $E$ from \eqref{eq:qe-conv-qk} by $E_1$ we conclude that
\begin{align}
 \nabla u_k =\frac{1}{q_k} v_k \,&\to\,  \frac{1}{q} v =\nabla^a u\qquad \quad  &&\text{ in }\{q>0\}\setminus E,\label{eq:3.8}\\
Q_k =\sqrt{1+|\nabla u_k |^2 }\,&\to\,  \sqrt{1+|\nabla^a u |^2 }  =Q^a \quad  &&\text{ in }\{q>0\}\setminus E, \label{eq:3.9}
\end{align}
where $\capa_{3/2}(E)=0$. Using \eqref{eq:3.6} we deduce that
\begin{align*}
	\int_\Omega Q^a\,dx \,\leq\, \liminf_{k\to\infty} \int_\Omega Q_k\,dx\,\leq\, C.
\end{align*}
We next discuss convergence properties of the mean curvatures $H_k =\operatorname{div} \left( \frac{\nabla u_k}{Q_k} \right)$, $k\in\N$.
In view of  (\ref{eq:3.1}) we have that
$$
\int_\Omega H_k^2\, dx \le \int_\Omega H_k^2Q_k \, dx\le 4M.
$$
Hence there exists  ${H}^a \in L^2 (\Omega )$ such that after passing to a subsequence 
$$
	H_k \rightharpoonup {H}^a \mbox{\ in\ }L^2 (\Omega ).
$$
By Lebesgue's theorem we further deduce for any $\zeta \in C^\infty_0 (\Omega)$
\begin{eqnarray*}
 \int_\Omega {H}^a \zeta \, dx &=& \lim_{k\to\infty} \int_\Omega H_k \zeta \, dx 
=-  \lim_{k\to\infty} \int_\Omega \frac{\nabla u_k}{Q_k }\cdot \nabla \zeta = -  \int_\Omega\frac{\nabla^a u}{Q^a}\cdot \nabla \zeta \, dx,
\end{eqnarray*}
where we have used that $\frac{\nabla u_k}{Q_k }$ is uniformly bounded and converges pointwise a.e.~to $\frac{\nabla^a u}{Q^a}$ 
by \eqref{eq:3.6}, (\ref{eq:3.8}) and (\ref{eq:3.9}). This shows that $\frac{\nabla^a u}{Q^a}\in H(\dive,\Omega)$ and that
\begin{align*}
	\dive \frac{\nabla^a u}{Q^a} \,=\, H^a\quad\text{ weakly.}
\end{align*}
We next claim that even
\begin{equation}\label{eq:3.10}
H_k\sqrt{Q_k}\rightharpoonup H^a\sqrt{Q^a} \quad\mbox{\ in\ } L^2(\Omega ).
\end{equation}
By \eqref{eq:3.1} there exist  $f\in L^2 (\Omega )$ such that after passing to a subsequence 
$$
H_k \sqrt{Q_k} \rightharpoonup f\quad\text{ in }L^2(\Omega).
$$
Moreover, we have for any $\zeta \in C^\infty_0 (\Omega)$ that $\left(1-\frac{\sqrt{Q^a}}{\sqrt{Q_k}} \right) \zeta \to 0$ almost
everywhere and so, by Lebesgue's theorem and $\frac{\sqrt{Q^a}}{\sqrt{Q_k}} \le \sqrt{Q^a}$, in $L^2 (\Omega )$. Hence
\begin{eqnarray*}
  \int_\Omega \zeta \left( H_k\sqrt{Q_k} - H^a\sqrt{Q^a}\right) \, dx &=&  \int_\Omega \underbrace{H_k \sqrt{Q_k}}_{\mbox{\scriptsize $O(1)$ in $L^2$}}
\underbrace{  \left(1-\frac{\sqrt{Q^a}}{\sqrt{Q_k}} \right) \zeta  }_{\mbox{\scriptsize $\to 0$ in $L^2$}}\, dx
+\int_\Omega\underbrace{\zeta \sqrt{Q^a}  }_{\in L^2} \underbrace{(H_k -H^a) }_{\mbox{\scriptsize $\rightharpoonup 0$ in $L^2$}}\, dx\\
&\to & 0 \mbox{ for } k\to \infty.
\end{eqnarray*}
We conclude that for all $\zeta \in C^\infty_0 (\Omega)$
$$
 \int_\Omega \zeta \left(f - H^a\sqrt{Q^a} \right)\, dx=\lim_{k\to\infty}\int_\Omega \zeta \left( H_k \sqrt{Q_k}- H^a\sqrt{Q^a} \right)\, dx=0.
$$
This proves  that $f=H^a\sqrt{Q^a}$ and so finally \eqref{eq:3.10}.

The weak lower semicontinuity of the $L^2$-norm eventually yields
$$
\Wnv_0(u)= \frac{1}{4} \int_\Omega \left( H^a\right)^2 Q^a\, dx \le \frac{1}{4} \liminf_{k\to\infty }\int_\Omega H_k^2 Q_k\, dx=\liminf_{k\to\infty }W_0(u_k)
$$
as claimed.
\end{proof}


We next show that in $H^2(\Omega)$ subject to a suitable  boundary condition we have continuity of the total Gau\ss-curvature with respect to $L^1$-convergence. 
Together with \eqref{eq:rel_W0_Wgamma} and Theorem~\ref{thm:L1lowersemicontinuity} this implies lower semicontinuity as in \eqref{eq:3.3} also for 
$W_\gamma$, $\gamma \in \mathbb{R}$ arbitrary.

\begin{proposition}\label{prop:L1lowersemicontinuity}
Suppose that $\Omega $ is $C^3$--smooth
and that $\varphi\in C^3(\overline{\Omega}) $.

Let $u_k, u \in H^2(\Omega)$ satisfy $u_k-\varphi \in H^1_0(\Omega)$. 
Let $K_k$ and $K$, resp., denote the Gau\ss\ curvatures  of their graphs.
Then
\begin{equation*}
 u_k\to u \quad\text{\ in\ } L^1(\Omega),\qquad \sup_{k\in\N} W_0(u_k)\,<\,\infty
\end{equation*}
implies that
 \begin{equation}\label{eq:3.11}
 \int_\Omega K Q\, dx = \lim_{k\to\infty} \int_\Omega K_k Q_k\, dx.
\end{equation}
\end{proposition}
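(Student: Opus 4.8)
The plan is to reduce the convergence of the total Gauss curvature to a boundary integral via Gauss--Bonnet, and to pass to the limit there. By Remark~\ref{rem:Gauss_Bonnet} and the approximation argument of Lemma~\ref{lemma:Abounds}, every $w\in H^2(\Omega)$ with $w-\varphi\in H^1_0(\Omega)$ satisfies
\begin{equation*}
\int_\Omega K[w]\,Q[w]\,dx \,=\, 2\pi\chi(\Omega) - \int_{\partial\Omega} g(w_\nu)\,ds ,
\end{equation*}
where, after inserting the explicit formula of Remark~\ref{rem:geod_curv} and cancelling the arclength factor $\sqrt{1+\varphi'^2}$ of $\partial\graph(w)$,
\begin{equation*}
g(t) \,=\, \frac{-t\,\varphi'' + \kappa\,(1+\varphi'^2)}{(1+\varphi'^2+t^2)^{1/2}\,(1+\varphi'^2)} .
\end{equation*}
Applying this to $u_k$ and to $u$, the assertion \eqref{eq:3.11} becomes equivalent to $\int_{\partial\Omega} g((u_k)_\nu)\,ds \to \int_{\partial\Omega} g(u_\nu)\,ds$. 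The structural point is that $g(\cdot)$ is continuous on $\R$ and uniformly bounded, $|g(t)|\le |\varphi''|+|\kappa|$, with the fixed dominating function $|\varphi''|+|\kappa|\in L^1(\partial\Omega)$ guaranteed by the $C^3$-regularity of $\Omega$ and $\varphi$. By dominated convergence it therefore suffices to prove, along a subsequence, that $(u_k)_\nu\to u_\nu$ $\Ha^1$-almost everywhere on $\partial\Omega$.

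For this I would recycle the auxiliary sequences from the proof of Theorem~\ref{thm:L1lowersemicontinuity}: there $q_k=Q_k^{-5/2}\rightharpoonup q$ and $v_k=q_k\nabla u_k\rightharpoonup v$ in $H^1(\Omega)$. As the present limit $u$ lies in $H^2(\Omega)\subset W^{1,1}(\Omega)$ one has $\nabla^a u=\nabla u$, and \eqref{eq:3.7} together with the a.e.\ convergences $q_k\to q$, $\nabla u_k\to\nabla u$ identifies $q=(1+|\nabla u|^2)^{-5/4}$ and $v=q\,\nabla u$. Since the trace $H^1(\Omega)\to L^2(\partial\Omega)$ is compact, after passing to a subsequence $\operatorname{tr}(q_k)\to\operatorname{tr}(q)$ and $\operatorname{tr}(v_k)\to\operatorname{tr}(v)$ in $L^2(\partial\Omega)$, hence $\Ha^1$-a.e.\ on $\partial\Omega$. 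On $\partial\Omega$ the condition $u_k=\varphi$ forces $|\nabla u_k|^2=\varphi'^2+(u_k)_\nu^2$, so that $\operatorname{tr}(q_k)=(1+\varphi'^2+(u_k)_\nu^2)^{-5/4}$ and $\operatorname{tr}(v_k)\cdot\nu=(u_k)_\nu\,\operatorname{tr}(q_k)$, whence
\begin{equation*}
(u_k)_\nu \,=\, \frac{\operatorname{tr}(v_k)\cdot\nu}{\operatorname{tr}(q_k)} \qquad \Ha^1\text{-a.e.\ on }\partial\Omega .
\end{equation*}

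The main obstacle is to exclude blow-up of $(u_k)_\nu$ on a set of positive measure, i.e.\ a ``vertical'' portion of the boundary: there $g$ is not continuous as a map on $[-\infty,+\infty]$, since its limits at $\pm\infty$ differ whenever $\varphi''\neq0$, and the sign of the blow-up would be uncontrolled. This is exactly where the hypothesis $u\in H^2(\Omega)$ enters decisively, for it yields $\operatorname{tr}(q)=(1+\varphi'^2+u_\nu^2)^{-5/4}>0$ $\Ha^1$-a.e.\ on $\partial\Omega$. At each such point $s$ the displayed quotient and the trace convergence give $(u_k)_\nu(s)\to \operatorname{tr}(v)(s)\cdot\nu/\operatorname{tr}(q)(s)=u_\nu(s)$, which is finite (the last equality using $v=q\nabla u$); thus $(u_k)_\nu\to u_\nu$ $\Ha^1$-a.e., and dominated convergence yields $\int_{\partial\Omega} g((u_k)_\nu)\,ds\to\int_{\partial\Omega} g(u_\nu)\,ds$. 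Combined with the two Gauss--Bonnet identities this proves \eqref{eq:3.11} along the subsequence; since the limit $\int_\Omega KQ\,dx$ is the same for every subsequence, a routine Urysohn argument upgrades this to convergence of the whole sequence.
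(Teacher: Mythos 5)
Your proof is correct, and it reaches \eqref{eq:3.11} by a genuinely different route in the decisive limit-passage step, even though the setup coincides with the paper's: both arguments start from Gau\ss--Bonnet plus the explicit geodesic-curvature formula of Remark~\ref{rem:geod_curv}, and both recycle the $H^1$-bounded auxiliary sequences $q_k=Q_k^{-5/2}$, $v_k=q_k\nabla u_k$ from the proof of Theorem~\ref{thm:L1lowersemicontinuity} together with the compactness of the trace map $H^1(\Omega)\to L^2(\partial\Omega)$. The difference lies in the treatment of the boundary term containing $u_\nu$. The paper extends $\varphi''/(1+\varphi'^2)$ to a function $\alpha\in C^1(\overline{\Omega})$ and integrates by parts, converting that term into the interior integrals $\int_\Omega H\alpha\,dx+\int_\Omega\frac{\nabla u}{Q}\cdot\nabla\alpha\,dx$, which pass to the limit via $H_k\rightharpoonup H$ in $L^2(\Omega)$ and dominated convergence; only the benign term $\int_{\partial\Omega}\kappa\,Q_k^{-1}\,ds$ is handled on the boundary, for which a.e.\ convergence of $|\nabla u_k|$ on $\partial\Omega$ suffices. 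You instead stay on the boundary, recover $(u_k)_\nu=\operatorname{tr}(v_k)\cdot\nu/\operatorname{tr}(q_k)$, use $\operatorname{tr}(q)>0$ $\Ha^1$-a.e.\ (a consequence of $u\in H^2(\Omega)$) to obtain $(u_k)_\nu\to u_\nu$ $\Ha^1$-a.e., and conclude by dominated convergence with the majorant $|\varphi''|+|\kappa|$ from \eqref{eq:ged_curv}; this is close in spirit to the trace argument the paper itself uses later to establish \eqref{eq:bdry1}. Your version avoids the $C^1$ extension of $\varphi''/(1+\varphi'^2)$ and so would get by with essentially $C^2$ data; what the paper's detour through the interior buys is the representation \eqref{eq:3.12}, which survives when the limit is only of class $BV$ and $\nabla u_k\cdot\nu$ may fail to converge on the vertical set $\{q=0\}\cap\partial\Omega$ --- this is exactly what is exploited in Remark~\ref{rem:W_gamma_vs_widehat_W} and Proposition~\ref{prop:Wgamma} to define the generalised Gau\ss\ curvature. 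One step worth making explicit in your write-up: the identities $\operatorname{tr}(q_k)=(1+|\operatorname{tr}(\nabla u_k)|^2)^{-5/4}$ and $\operatorname{tr}(v_k)=\operatorname{tr}(q_k)\,\operatorname{tr}(\nabla u_k)$ (and likewise the identification of $\operatorname{tr}(v)\cdot\nu/\operatorname{tr}(q)$ with $u_\nu$) hold because $q_k$ and $v_k$ are bounded Lipschitz functions of $\nabla u_k\in H^1(\Omega,\R^2)$, so taking traces commutes with the composition.
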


\begin{proof}
 We shall first collect a number of equivalent representations for the total curvature which are
convenient in different situations we have to deal with. We use the notation from Remark~\ref{rem:geod_curv} and assume the boundary $\partial\Omega$
to be parametrised by arclength, $\varphi'(s)$ and $\varphi''(s)$ have to be understood correspondingly.
According to Remarks~\ref{rem:geod_curv} and  \ref{rem:Gauss_Bonnet}
and using an approximation argument  we have
\begin{eqnarray}
   \int_{\Omega} K Q \, dx &=&2 \pi \chi(\Omega)-\int_{\partial \graph(u)} \kappa_g ds
= 2 \pi \chi(\Omega)- \int_{\partial \Omega}\frac{-u_\nu(s) \varphi''(s)+\kappa (s) (1+\varphi'(s)^2)}{(1+\varphi'(s)^2+u_\nu (s)^2)^{1/2} (1+\varphi'(s)^2 )}\, ds \nonumber\\
&=& 2 \pi \chi(\Omega) + \int_{\partial \Omega}\frac{\nabla u\cdot \nu(s) }{Q }\cdot \frac{\varphi''(s)}{1+\varphi'(s)^2 }\, ds
-\int_{\partial \Omega}\frac{\kappa (s) }{Q }\, ds. \label{eq:Gauss-v2}
\end{eqnarray}
Thanks to our smoothness assumptions on the boundary data we find a function
$$
\alpha\in C^1(\overline{\Omega}) :\qquad \alpha|_{\partial\Omega} =\frac{\varphi''}{1+{\varphi'}^2 }|_{\partial\Omega}
$$
and may proceed:
\begin{equation}\label{eq:3.12}
   \int_{\Omega} K Q \, dx =2 \pi \chi(\Omega) +  \int_{ \Omega} H(x)\alpha(x)\, dx +\int_{ \Omega}\frac{\nabla u}{Q}\cdot \nabla \alpha \, dx
-\int_{\partial \Omega}\frac{\kappa (s) }{Q }\, ds.
\end{equation}

Now, consider a sequence  $u_k\to u \mbox{\ in\ } L^1(\Omega)$ as described in our assumptions.
According to the proof of Theorem~\ref{thm:L1lowersemicontinuity} and since $u\in H^2(\Omega)$ we have after passing to a suitable subsequence
\begin{eqnarray*}
 && H_k \rightharpoonup H\mbox{\ in\ } L^2(\Omega),\\
&& \nabla u_k \to \nabla u, \quad Q_k\to Q \mbox{\ a.e.~in \ } \Omega,\\
&& Q_k^{-5/2}=\left(1+|\nabla u_k|^2 \right)^{-5/4} \rightharpoonup Q^{-5/2}=\left(1+|\nabla u|^2 \right)^{-5/4} \mbox{\ in\ } H^1 (\Omega), \mbox{\ hence}\\
&& Q_k^{-5/2}  =\left(1+|\nabla u_k|^2 \right)^{-5/4}\rightarrow Q^{-5/2}=\left(1+|\nabla u|^2 \right)^{-5/4} \mbox{\ in\ } L^2 (\partial\Omega), \mbox{\ hence}\\
&& Q_k^{-5/2}  =\left(1+|\nabla u_k|^2 \right)^{-5/4}\rightarrow Q^{-5/2}=\left(1+|\nabla u|^2 \right)^{-5/4}  \quad\Ha^1\mbox{-a.e.~on \ } \partial\Omega.
\end{eqnarray*}
One should observe that thanks to $u_k,u\in H^2(\Omega)$, 
we also have $\nabla u_k|_{\partial\Omega}, \nabla u|_{\partial\Omega}\in L^2(\partial\Omega)$ and in particular
that $|\nabla u_k| <\infty $, $|\nabla u| <\infty $ $\Ha^1$-a.e.~on $\partial\Omega$.
We conclude that 
$$
Q_k=\sqrt{1+|\nabla u_k|^2 }\to Q=\sqrt{1+|\nabla u |^2 }\quad\Ha^1\mbox{-a.e.~on \ } \partial\Omega.
$$
Making use of Lebesgue's theorem and observing that $|\frac{\nabla u_k}{Q_k} |\le 1$ and $\frac{1 }{Q_k }\le 1$, this yields
\begin{eqnarray*}
    \int_{\Omega} K_k Q_k \, dx &=&2 \pi \chi(\Omega) +  \int_{ \Omega} H_k(x)\alpha(x)\, dx +\int_{ \Omega}\frac{\nabla u_k}{Q_k}\cdot \nabla \alpha \, dx
-\int_{\partial \Omega}\frac{\kappa (s) }{Q_k }\, ds\\
&\to &2 \pi \chi(\Omega) +  \int_{ \Omega} H(x)\alpha(x)\, dx +\int_{ \Omega}\frac{\nabla u}{Q}\cdot \nabla \alpha \, dx
-\int_{\partial \Omega}\frac{\kappa (s) }{Q }\, ds\\
&=& \int_{\Omega} K\,  Q \, dx.
\end{eqnarray*}
Since the previous reasoning can be carried out for any subsequence we have also convergence
of the whole sequence.
\end{proof}

\section{Minimising a relaxed Willmore functional in $L^1 (\Omega)$}\label{sec:minimiser}

\subsection{Dirichlet boundary conditions}\label{sec:minimiser_Dirichlet}

In what follows we always assume $\Omega\subset\mathbb{R}^2$ to be a bounded $C^2$-smooth domain
and fix a boundary datum $\varphi \in C^2(\overline{\Omega})$.

To model Dirichlet boundary conditions \eqref{eq:dirichlet}, i.e.
$$
u=\varphi \quad\mbox{\ and\ }\quad \frac{\partial u}{\partial \nu}=\frac{\partial\varphi }{\partial \nu} \quad\mbox{\ on\ }\partial \Omega,
$$
we consider the set
$$
\MDir:=\{ u\in H^2 (\Omega):\  (u-\varphi) \in H^2_0(\Omega)\}.
$$
As mentioned before (see Remark~\ref{rem:Gauss_Bonnet}) in this situation it suffices to consider the original Willmore functional $W_0$
since the total Gau\ss\ curvature is completely determined by the data.

Following Ambrosio \& Masnou \cite[Introduction \& Section 4]{AmbrosioMasnou} (cf. also \cite{BellettiniDalMasoPaolini,BellettiniMugnai}
and references therein), we  define the $L^1(\Omega)$--lower semicontinuous relaxation of the Willmore functional:
$$
\overline{W}:L^1( \Omega)\to [0,\infty],\quad \overline{W}(u):=\inf \{ \liminf_{k\to\infty} W_0(u_k): \MDir\ni u_k \to u 
\mbox{\ in\ } L^1 (\Omega)\}.
$$
We remark that such approximating sequences always exist. However, their Willmore energy may not be bounded and $\infty$ will certainly be attained by 
$\overline{W}$ for some $u\in L^1(\Omega)$. From the area and diameter bound we however obtain that any $u\in L^1(\Omega)$ with 
$\overline{W}(u)<\infty$ belongs at least to $BV(\Omega)\cap L^\infty(\Omega)$. 

One should observe that the Dirichlet boundary conditions are not encoded in the domain of definition of $\overline{W}$
but implicitly included by restricting the class of approximating sequences
to functions that satisfy the boundary conditions in $H^2(\Omega)$. We will prove below that
$\overline{W}(u)<\infty$ implies attainment of the Dirichlet boundary conditions in an appropriate weak sense.

We show first that $\overline{W}$ and $W_0$ coinicide on $\MDir$.

\begin{theorem}\label{thm:coinidence_relaxation}
For  $u\in\MDir$ one has $\overline{W}(u)=W_0(u)$.
\end{theorem}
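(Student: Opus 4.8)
The plan is to prove the two inequalities $\overline{W}(u) \le W_0(u)$ and $\overline{W}(u) \ge W_0(u)$ separately; the first is essentially trivial, and the second follows directly from the lower-semicontinuity statement already established in Theorem~\ref{thm:L1lowersemicontinuity}.

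First I would establish the upper bound. Since $u \in \MDir$, the constant sequence $u_k := u$ is an admissible competitor in the definition of $\overline{W}(u)$: it lies in $\MDir$ and trivially converges to $u$ in $L^1(\Omega)$. Hence $\overline{W}(u) \le \liminf_{k\to\infty} W_0(u_k) = W_0(u)$.

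For the lower bound, I would fix an arbitrary sequence $(u_k)_{k\in\N} \subset \MDir$ with $u_k \to u$ in $L^1(\Omega)$ and show $\liminf_{k\to\infty} W_0(u_k) \ge W_0(u)$. If $\liminf_{k\to\infty} W_0(u_k) = \infty$ there is nothing to prove, so I may assume this value is finite. The key observation is that every element of $\MDir$ satisfies the weaker boundary condition required by Theorem~\ref{thm:L1lowersemicontinuity}, since $u_k - \varphi \in H^2_0(\Omega) \subset H^1_0(\Omega)$; thus the hypotheses of that theorem are met. It then produces a subsequence converging in $L^1(\Omega)$ to a limit in $BV(\Omega) \cap L^\infty(\Omega)$, and by uniqueness of $L^1$-limits this limit must coincide with $u$. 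Since $u \in H^2(\Omega)$ by assumption, the ``in particular'' conclusion \eqref{eq:3.3} of Theorem~\ref{thm:L1lowersemicontinuity} applies and yields $W_0(u) \le \liminf_{k\to\infty} W_0(u_k)$. Taking the infimum over all admissible sequences gives $\overline{W}(u) \ge W_0(u)$, and combining with the upper bound completes the proof.

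Since all the analytical difficulty is contained in Theorem~\ref{thm:L1lowersemicontinuity}, there is no serious remaining obstacle here. The only point requiring care is the compatibility of the boundary conditions, namely that the stronger $H^2_0$-condition defining $\MDir$ entails the $H^1_0$-condition under which the lower-semicontinuity theorem is stated, so that the theorem is indeed applicable to the approximating sequences.
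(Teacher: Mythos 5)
Your proposal is correct and follows essentially the same route as the paper: the upper bound via the constant approximating sequence, and the lower bound by applying Theorem~\ref{thm:L1lowersemicontinuity} to an arbitrary admissible sequence (noting $H^2_0(\Omega)\subset H^1_0(\Omega)$) and invoking \eqref{eq:3.3} since $u\in H^2(\Omega)$. No substantive differences.
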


\begin{proof}
For all $u\in\MDir$ the inequality $W_0(u)\ge \overline{W}(u)$ is obvious by definition.
To prove the opposite inequality take any sequence $(u_k)_{k\in \mathbb{N}}\subset\MDir $
with $u_k\to u $ in $L^1 (\Omega)$ and $\liminf_{k\to\infty} W_0(u_k)<\infty$. By
Theorem~\ref{thm:L1lowersemicontinuity} and since $u\in H^2(\Omega)$ we deduce
\begin{align*}
	W_0(u) \,=\, \Wnv_0(u) \,\le\, \liminf_{k\to\infty} W_0(u_k).
\end{align*}
This yields $W_0(u)\le \overline{W}(u)$.
\end{proof}

In the following proposition we discuss the implications of finiteness of $ \overline{W}(u)$.

\begin{proposition}\label{prop:W_vs_overline_W}
Suppose that $ \overline{W}(u)<\infty$ for some $u\in L^1(\Omega)$. Then 
\begin{align}
	u\,\in\, BV(\Omega)\cap L^\infty(\Omega),\qquad \frac{\nabla^a u}{Q^a}\in H(\operatorname{div},\Omega)\quad\text{ and }\quad
	\Wnv_0(u) \leq \overline{W}(u) \label{eq:Prop2-1}
\end{align}
holds. Moreover, 
both the trace of $u$ and the absolutely continuous representative of $\nabla^a u$ are well-defined $\Ha^1$-almost everywhere on $\partial\Omega$ 
and satisfy the Dirichlet boundary conditions \eqref{eq:dirichlet} $\Ha^1$-almost everywhere on $\partial\Omega$.
\end{proposition}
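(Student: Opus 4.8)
The first assertion in \eqref{eq:Prop2-1} follows almost directly from Theorem~\ref{thm:L1lowersemicontinuity}. The plan is to produce a single recovery sequence: by the definition of $\overline{W}(u)$ as an infimum, for every $j\in\N$ there is a sequence in $\MDir$ converging to $u$ in $L^1(\Omega)$ whose $\liminf$ of energies lies below $\overline{W}(u)+\tfrac1j$, and a diagonal selection then produces $(u_k)_{k\in\N}\subset\MDir$ with $u_k\to u$ in $L^1(\Omega)$ and $\liminf_{k\to\infty}W_0(u_k)=\overline{W}(u)<\infty$. Since $\MDir$ enforces $u_k-\varphi\in H^2_0(\Omega)\subset H^1_0(\Omega)$, Theorem~\ref{thm:L1lowersemicontinuity} applies and yields a subsequence converging in $L^1(\Omega)$ to a function in $BV(\Omega)\cap L^\infty(\Omega)$ with $\frac{\nabla^a u}{Q^a}\in H(\dive,\Omega)$ and absolutely continuous energy bounded by $\overline{W}(u)$; by uniqueness of the $L^1$-limit this function is $u$, which gives \eqref{eq:Prop2-1}.

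For the boundary conditions I would reuse the auxiliary sequences $q_k=Q_k^{-5/2}$ and $v_k=q_k\nabla u_k$ from the proof of Theorem~\ref{thm:L1lowersemicontinuity}: they are bounded in $H^1(\Omega)$, converge weakly there to limits $q,v$, and satisfy $v=q\nabla^a u$ almost everywhere by \eqref{eq:3.7} and $v\,\LL^2=q\nabla u$ as measures by \eqref{eq:3.7-new}. The decisive new ingredient is that the approximating sequence now lies in $\MDir$: the condition $u_k-\varphi\in H^2_0(\Omega)$ forces $\nabla u_k=\nabla\varphi$ on $\partial\Omega$ in the trace sense (both tangential and normal components), so that the traces $q_k|_{\partial\Omega}=(1+|\nabla\varphi|^2)^{-5/4}$ and $v_k|_{\partial\Omega}=q_k|_{\partial\Omega}\,\nabla\varphi$ are \emph{independent of} $k$. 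By continuity of the trace operator $H^1(\Omega)\to L^2(\partial\Omega)$ one then obtains $q=(1+|\nabla\varphi|^2)^{-5/4}$ and $v=(1+|\nabla\varphi|^2)^{-5/4}\nabla\varphi$ on $\partial\Omega$; in particular $q>0$ $\Ha^1$-almost everywhere on \emph{all} of $\partial\Omega$, which is exactly why the boundary conditions can be attained on the whole boundary and not merely on the non-vertical part.

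To obtain $u=\varphi$ I would run the integration by parts behind \eqref{eq:3.7-1}, now with a test field $\eta\in C^1(\overline\Omega,\R^2)$ that need not vanish on $\partial\Omega$. Passing to the limit in $\int_\Omega q_k\nabla u_k\cdot\eta\,dx=-\int_\Omega u_k\,\nabla\cdot(q_k\eta)\,dx+\int_{\partial\Omega}\varphi\,q_k\,(\eta\cdot\nu)\,ds$ (using $u_k=\varphi$ on $\partial\Omega$, the strong $L^2$-convergence of $u_k,q_k$, the weak $L^2$-convergence of $\nabla q_k$, and the trace convergence of $q_k$) gives $\int_\Omega v\cdot\eta\,dx=-\int_\Omega u\,\nabla\cdot(q\eta)\,dx+\int_{\partial\Omega}\varphi\,q\,(\eta\cdot\nu)\,ds$. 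On the other hand $\int_\Omega v\cdot\eta\,dx=\int_\Omega q\eta\cdot d(\nabla u)$ by \eqref{eq:3.7-new}, and the generalised Gauss--Green formula for the $BV$ function $u$ tested against $q\eta$ gives $\int_\Omega q\eta\cdot d(\nabla u)=-\int_\Omega u\,\nabla\cdot(q\eta)\,dx+\int_{\partial\Omega}u^{\mathrm{tr}}\,q\,(\eta\cdot\nu)\,ds$; this identity is justified by approximating $q\eta$ in $H^1(\Omega)$ by smooth fields converging $\Ha^1$-, hence $|\nabla u|$-, almost everywhere (the capacity argument of \eqref{eq:3.7-2}) together with $u^{\mathrm{tr}}\in L^\infty(\partial\Omega)$. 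Comparing the two expressions and letting $\eta\cdot\nu$ range over all boundary functions yields $(u^{\mathrm{tr}}-\varphi)\,q=0$ $\Ha^1$-a.e.\ on $\partial\Omega$, whence $u^{\mathrm{tr}}=\varphi$ since $q>0$.

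Finally, for the normal-derivative condition I would work with the approximately continuous representative of $\nabla^a u$, which equals $q^{-1}v$ on $\{q>0\}$ by \eqref{eq:3.7-new2}. The trace of $u$ is defined $\Ha^1$-a.e.\ since $u\in BV(\Omega)$, so it remains to show that at $\Ha^1$-a.e.\ $x_0\in\partial\Omega$ the approximate limit of $\nabla^a u$ from inside $\Omega$ exists and equals $\nabla\varphi(x_0)$. I would take $x_0$ to be a common Lebesgue boundary point of the $H^1$-functions $v$ and $q$ (such points are $\Ha^1$-a.e.\ on the $C^2$-boundary), where $q^{\mathrm{tr}}(x_0)=:q_0>0$ and $v^{\mathrm{tr}}(x_0)=q_0\nabla\varphi(x_0)$, and estimate the density of $\{x\in B_r(x_0)\cap\Omega:|\nabla^a u(x)-\nabla\varphi(x_0)|\ge\eps\}$ by splitting $B_r(x_0)\cap\Omega$ into the part where $q\ge q_0/2$ (on which $|q^{-1}v-q_0^{-1}v^{\mathrm{tr}}(x_0)|$ is controlled by $|v-v^{\mathrm{tr}}(x_0)|$ and $|q-q_0|$, whose averages vanish as $r\downarrow0$) and the part where $q<q_0/2$ (which has vanishing density since $x_0$ is a Lebesgue boundary point of $q$). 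This identifies the boundary value of $\nabla^a u$ as $\nabla\varphi$, and in particular $\partial_\nu u=\partial_\nu\varphi$. I expect this last step to be the main obstacle: the division by $q$ requires controlling the a priori unknown set where the graph becomes vertical, and one must make precise in which sense the absolutely continuous representative of $\nabla^a u$ admits boundary values; the Gauss--Green step with the merely $H^1$-regular coefficient $q\eta$ is a second, milder, technical point, handled by the capacity-based approximation already employed in the proof of Theorem~\ref{thm:L1lowersemicontinuity}.
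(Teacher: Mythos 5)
Your argument is correct, and while the first part and the overall skeleton (recovery sequence in $\MDir$, Theorem~\ref{thm:L1lowersemicontinuity}, positivity of $q$ on $\partial\Omega$, division by $q$) coincide with the paper's, you reach the attainment of the boundary conditions by a genuinely different route. The paper extends each $u_k$ by $\varphi$ to a larger domain $\Omega_1$ with $\Omega\subset\subset\Omega_1$, so that $\partial\Omega$ becomes an interior set and the quasi-everywhere pointwise convergences \eqref{eq:qe-conv-qk}, \eqref{eq:3.8} apply directly at $\Ha^1$-almost every boundary point; this yields $q>0$ and $\nabla u_k\to\nabla^a u$ on $\partial\Omega$ up to a capacity-null set without invoking trace theory. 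For $u=\varphi$ it then introduces the auxiliary $H^1$-bounded products $g_k=Q_k^{-3/2}$, $e_k=u_kQ_k^{-3/2}$ and identifies $e=ug$ at Lebesgue boundary points, concluding from $e=\varphi g$ and $g>0$. You instead stay on $\Omega$ and exploit that the traces $q_k|_{\partial\Omega}$ and $v_k|_{\partial\Omega}$ are independent of $k$ (here it matters that $v_k=F(\nabla u_k)$ for the bounded Lipschitz map $F(p)=(1+|p|^2)^{-5/4}p$, so the trace commutes with the superposition and the full condition $u_k-\varphi\in H^2_0(\Omega)$ is what fixes $\nabla u_k=\nabla\varphi$ on $\partial\Omega$); you then obtain $u^{\mathrm{tr}}=\varphi$ from a Gauss--Green comparison with test fields not vanishing on $\partial\Omega$, and $\nabla^a u=\nabla\varphi$ by a direct approximate-limit computation at common Lebesgue boundary points of $q$ and $v$, splitting off the low-density set $\{q<q_0/2\}$ by Chebyshev. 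Both routes are sound: the extension trick buys the convenience of interior fine-property machinery at boundary points and avoids a Gauss--Green formula with the merely $H^1\cap L^\infty$ coefficient $q\eta$, which you must --- and correctly do --- justify by the capacity approximation underlying \eqref{eq:3.7-2} together with trace continuity and $u^{\mathrm{tr}}\in L^\infty(\partial\Omega)$; your route avoids the extension and the auxiliary functions $g_k,e_k$ altogether and in fact yields the slightly stronger conclusion that the full gradient, not only its normal component, attains the boundary value $\nabla\varphi$. In a final write-up you should make explicit the routine compatibility of the various subsequences and capacity-null exceptional sets, and the justification of the $k$-th integration by parts, i.e.\ the divergence theorem for the $W^{1,1}$ field $u_kq_k\eta$ using $u_k\in H^2(\Omega)\hookrightarrow C^0(\overline\Omega)$.
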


\begin{proof}
Let  $u\in L^1(\Omega)$ satisfy $\overline{W}(u)<\infty$. Then there exists  
a  sequence $(u_k)_{k\in\mathbb{N} }\subset \MDir$
such that  $u_k\to u$ in $L^1(\Omega)$ and $\overline{W}(u)=\lim_{k\to\infty} W_0(u_k) $. 
From Theorem~\ref{thm:L1lowersemicontinuity} we deduce \eqref{eq:Prop2-1}.

It therefore remains to prove the attainment of the boundary data (\ref{eq:dirichlet}). Let us choose an open bounded set ${\Omega}_1\subset\R^2$ 
with smooth boundary such that $\Omega\subset\subset\Omega_1$ and let us extend $\varphi$ to $\varphi\in C^2(\overline{\Omega_1})$. 
We also extend $u_k$ by $\varphi|_{\Omega_1\setminus\Omega}$ and obtain a sequence $(u_k)_k$ in $H^2(\Omega_1)$ 
with uniformly bounded Willmore energy also with respect to the larger domain $\Omega_1$. 
Theorem~\ref{thm:L1lowersemicontinuity} and the properties \eqref{eq:qe-conv-qk}, \eqref{eq:nablau-sing}, \eqref{eq:3.7-new2}, \eqref{eq:3.8}, \eqref{eq:3.9} show that 
\begin{align}
	q_k &\to q, \quad v_k \to v &&\text{ in }\Omega_1\setminus E,	\label{eq:prev1}\\
	 q\nabla^s u  \,&=\, 0\qquad&&\text{ as Radon measures on }\Omega_1, \label{eq:nablau-sing-bdry}
	\\
	\label{eq:3.7-new2-bdry}
	\nabla^a u \,&=\, q^{-1}v \quad&&\text{ on } \{q>0\}\setminus E,\\
	\nabla u_k \,&\to\,  \nabla^a u,\quad 
	Q_k \,\to\,  \sqrt{1+|\nabla^a u |^2 }  =Q^a \quad  &&\text{ in }\{q>0\}\setminus E,	
	\label{eq:prev3.9}
\end{align}
where $E\subset\Omega_1$ has $\frac{3}{2}$-capacity zero and the approximately continuous representative of $\nabla^a u$ exists everywhere in $\{q>0\}\setminus E$.
We recall that $q>0$ a.e. in $\Omega_1$.

On $\partial\Omega$ we have, denoting by $\tau$ a unit tangent field on $\partial\Omega$
\begin{align*}
	Q_k\,&=\, \sqrt{1+|\nabla u_k|^2} \,=\, \sqrt{1+(\nu\cdot\nabla u_k)^2 + (\tau\cdot\nabla u_k)^2}\,
=\, \sqrt{1+(\frac{\partial \varphi}{\partial\nu})^2 + (\frac{\partial\varphi}{\partial\tau})^2}.
\end{align*}
We deduce from \eqref{eq:dirichlet} and \eqref{eq:prev1} that
\begin{align}
	q\,>\, 0 \quad\text{ on }\partial\Omega\setminus E, \label{eq:q>0-bdry}
\end{align}
in particular $\Ha^{1}$-a.e.~on $\partial\Omega$. By \cite[Lemma 3.76]{AmFP00} we have $|\nabla^s u|(E)=0$ and by \eqref{eq:nablau-sing-bdry}, 
\eqref{eq:q>0-bdry} this yields $|\nabla^s u|(\partial\Omega)=0$. 

By \eqref{eq:prev3.9}, \eqref{eq:q>0-bdry} we deduce for the approximately continuous representative $\nabla^a u$, which is well-defined $\Ha^1$-a.e.~on $\partial\Omega$, that
\begin{align}
	\nabla^a u\cdot\nu \,&=\, \lim_{k\to\infty} \nabla u_k\cdot\nu \,=\, \nabla\varphi\cdot\nu\qquad \Ha^1\text{-almost everywhere on }\partial\Omega. \label{eq:bdry1}
\end{align}
This proves the attainment of the second Dirichlet boundary datum.

From now on we will work in the original domain $\Omega$. We observe that $g_k:=Q_k^{-3/2}$ and $e_k:=u_k\, Q_k^{-3/2}$ satisfy
\begin{eqnarray*}
	\partial_i g_k & = & -\frac{3}{2}\sum^2_{j=1} (\partial_ju_k)\, (\partial_j\partial_iu_k)\, Q_k^{-7/2},\\
	\partial_i e_k & = & (\partial_iu_k) Q_k^{-3/2}-\frac{3}{2}\sum^2_{j=1}u_k (\partial_ju_k)\, (\partial_j\partial_iu_k)\, Q_k^{-7/2}.
\end{eqnarray*}
Using the diameter bound (\ref{eq:3.4}) and  (\ref{eq:3.5}) we infer that the sequences $(g_k)_{k \in \mathbb{N}}$
 and $(e_k )_{k \in \mathbb{N}}$ are bounded in $H^1(\Omega)$.
After passing to suitable subsequences and possibly enlarging the set $E$, we obtain in addition to \eqref{eq:prev1}--\eqref{eq:q>0-bdry} that 
\begin{align}
	g_k\,&\rightharpoonup\, g,\quad e_k \rightharpoonup  e \quad\text{ in }H^1 (\Omega), \label{eq:gk-ek-H1}\\
	g_k\,&\to\, g,\quad e_k \to  e\quad
	\text{ in }\Omega\setminus E,\quad\capa_{3/2}(E)=0, \label{eq:gk-ek}
\end{align}
and that $g=(Q^a)^{-{3}/{2}}$ and $e=ug$ hold almost everywhere in $\Omega$.

We next claim that $\Ha^1$-almost everywhere on $\partial\Omega$ the traces of $e,g,u$, which are
well-defined by \cite[Theorem 5.3.1]{EvGa92}, satisfy $e=u g$.
In fact, $\Ha^1$-almost all $x_0\in \partial\Omega$ are by \cite[Theorem 3.87]{AmFP00} Lebesgue boundary points,
\begin{align*}
	\lim_{r\downarrow 0}\fint_{B_r(x_0)\cap\Omega} |u(x)-u(x_0)|\,dx \,&=\, 0,\\
	\lim_{r\downarrow 0}\fint_{B_r(x_0)\cap\Omega} |g(x)-g(x_0)|\,dx \,&=\, 0,\qquad
	\lim_{r\downarrow 0}\fint_{B_r(x_0)\cap\Omega} |e(x)-e(x_0)|\,dx \,=\, 0.
\end{align*}
Using these properties we deduce
\begin{align*}
	&|e(x_0)-u(x_0)g(x_0)|\\
	=\,&\fint_{B_r(x_0)\cap\Omega} |e(x_0)-u(x_0)g(x_0)|\,dx \\
	\leq\, & \fint_{B_r(x_0)\cap\Omega} \Big(|e(x_0)-e(x)| +
	 |u(x_0)(g(x_0)-g(x))| +
	 |g(x)(u(x_0)-u(x))|\Big)\,dx \\
	&\qquad + \fint_{B_r(x_0)\cap\Omega} |e(x)-u(x)g(x)|\,dx\\
	\to\,& 0\quad (r\downarrow 0),
\end{align*}
since the last integral is zero and since $u,g$ are uniformly bounded. This shows
\begin{align}
	ug \,=\, e\qquad\Ha^1\text{-almost everywhere on }\partial\Omega. \label{eq:ug=e}
\end{align}
We further obtain from $g=q^{3/5}$ in $H^1(\Omega)$ that $g=q^{3/5}$ holds $\Ha^1$-almost everywhere on $\partial\Omega$ for the corresponding traces. 
Furthermore the sets of Lebesgue boundary points of $q$ and Lebesgue boundary points of $g$ on $\partial\Omega$ are the same, since
\begin{align*}
	\fint_{B_r(x_0)\cap\Omega} |q^{3/5}-q^{3/5}(x_0)|\,dx \,
	\leq\, \fint_{B_r(x_0)\cap\Omega} |q-q(x_0)|^{3/5}\,dx
	\,\leq\, \fint_{B_r(x_0)\cap\Omega} |q-q(x_0)|\,dx,\\
	\fint_{B_r(x_0)\cap\Omega} |q-q(x_0)|\,dx\,\leq\, \fint_{B_r(x_0)\cap\Omega} |(q^{3/5})^{5/3}-(q(x_0)^{3/5})^{5/3}|\,dx
	\,\leq\, \fint_{B_r(x_0)\cap\Omega} \frac{5}{3}|q^{3/5}-q(x_0)^{3/5}|\,dx,
\end{align*}
where we have used $|q|\leq 1$. In particular, this implies 
\begin{align}
	\{g>0\}\cap (\partial\Omega\setminus B)\,=\, \{q>0\}\cap (\partial\Omega\setminus B) \quad\text{ for some }B\subset\partial\Omega \text{ with }\Ha^1(B)=0, \label{eq:qg>0}
\end{align}
and by \eqref{eq:q>0-bdry} 
\begin{align}
	g\,>0\, \quad\Ha^1\text{-almost everywhere on }\partial\Omega. \label{eq:g>0}
\end{align} 

By \eqref{eq:gk-ek-H1} and since $u_k$ satisfies the first Dirichlet boundary condition we find  that in $L^2(\partial\Omega)$ and $\Ha^1$-almost everywhere on $\partial\Omega$
\begin{align*}
	e \,=\,\lim_{k\to\infty} e_k\,=\,\lim_{k\to\infty} \varphi g_k\,=\, \varphi g
\end{align*}
holds. This yields by \eqref{eq:ug=e}, \eqref{eq:g>0} that $u=\varphi$ is satisfied $\Ha^1$-almost everywhere on $\partial\Omega$.
Together with \eqref{eq:bdry1} this proves that the Dirichlet boundary data are attained.
\end{proof}
Since by construction the lower semicontinuous relaxation is lower semicontinuous and by the compactness property from Theorem \ref{thm:L1lowersemicontinuity} 
we obtain the existence of a minimiser for $\overline{W}$, which is even bounded and has
finite surface area.

\begin{theorem}\label{thm:existence_minimiser}
There exists a function $u\in BV( \Omega)\cap L^\infty (\Omega)$ such that
$$
\forall v\in L^1( \Omega):\quad \overline{W}(u)\le \overline{W}(v).
$$
\end{theorem}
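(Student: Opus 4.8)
The plan is to apply the direct method, combining the compactness supplied by Theorem~\ref{thm:L1lowersemicontinuity} with the $L^1$-lower semicontinuity that is built into the relaxation $\overline{W}$. First I would check that $m:=\inf_{v\in L^1(\Omega)}\overline{W}(v)$ is finite. Since $\varphi\in C^2(\overline{\Omega})\subset H^2(\Omega)$ and $\varphi-\varphi=0\in H^2_0(\Omega)$, we have $\varphi\in\MDir$, so Theorem~\ref{thm:coinidence_relaxation} gives $m\le\overline{W}(\varphi)=W_0(\varphi)<\infty$; moreover $m\ge 0$ because $W_0\ge 0$ forces $\overline{W}\ge 0$. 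Thus $m\in[0,\infty)$.

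Next I would take a minimising sequence $(u_j)_{j\in\N}\subset L^1(\Omega)$ with $\overline{W}(u_j)\to m$ and pass to an $H^2$-sequence by a diagonal selection of near-optimal approximants. For each $j$, the definition of $\overline{W}(u_j)$ provides a sequence in $\MDir$ converging to $u_j$ in $L^1$ with $\liminf$ of energies $\le\overline{W}(u_j)+1/j$; choosing one element sufficiently far along this sequence yields $w_j\in\MDir$ with $\|w_j-u_j\|_{L^1(\Omega)}\le 1/j$ and $W_0(w_j)\le\overline{W}(u_j)+2/j$. Then $(w_j)$ lies in $\MDir\subset H^2(\Omega)$, satisfies $w_j-\varphi\in H^2_0(\Omega)\subset H^1_0(\Omega)$, and obeys $\limsup_j W_0(w_j)\le m<\infty$, so in particular $\liminf_j W_0(w_j)<\infty$.

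Now I would apply Theorem~\ref{thm:L1lowersemicontinuity} to $(w_j)$: after passing to a subsequence there exists $u\in BV(\Omega)\cap L^\infty(\Omega)$ with $w_j\to u$ in $L^1(\Omega)$. Because $\|u_j-u\|_{L^1(\Omega)}\le\|u_j-w_j\|_{L^1(\Omega)}+\|w_j-u\|_{L^1(\Omega)}\to 0$ along this subsequence, the original minimising sequence also satisfies $u_j\to u$ in $L^1(\Omega)$. Finally I would invoke the $L^1$-lower semicontinuity of $\overline{W}$ to conclude $\overline{W}(u)\le\liminf_j\overline{W}(u_j)=m$; since $m$ is the infimum we then have $\overline{W}(u)=m$, and $u\in BV(\Omega)\cap L^\infty(\Omega)$ is the claimed minimiser.

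The one point deserving independent justification, and the main subtlety, is that compactness in Theorem~\ref{thm:L1lowersemicontinuity} is available only for $H^2$-sequences with bounded Willmore energy, whereas the minimising sequence and the lower-semicontinuity argument live on all of $L^1(\Omega)$; bridging this gap is precisely what the diagonal choice of $\MDir$-approximants accomplishes. I would prove the $L^1$-lower semicontinuity of $\overline{W}$ by the same relaxation argument: given any sequence $v_j\to v$ in $L^1(\Omega)$ with $\liminf_j\overline{W}(v_j)=L$, a further diagonal extraction of near-optimal approximants in $\MDir$ produces a single sequence in $\MDir$ converging to $v$ in $L^1(\Omega)$ whose energies have $\liminf\le L$, whence $\overline{W}(v)\le L$ by definition of the relaxation. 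This is entirely standard, but it is the step that genuinely uses the structure of the relaxation rather than any of the earlier geometric estimates.
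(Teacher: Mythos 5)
Your proposal is correct and follows essentially the same route as the paper: extract an $\MDir$-minimising sequence from the definition of $\overline{W}$ (the paper does this implicitly where you spell out the diagonal selection), apply the compactness of Theorem~\ref{thm:L1lowersemicontinuity} to get an $L^1$-limit $u\in BV(\Omega)\cap L^\infty(\Omega)$, and conclude from the relaxation structure. The only (cosmetic) difference is that you pass through a separately proved $L^1$-lower semicontinuity of $\overline{W}$ applied to the original sequence, whereas the paper simply applies the definition of $\overline{W}$ directly to the $\MDir$-approximants converging to $u$.
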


\begin{proof}
 We consider
$$
\alpha :=\inf\{\overline{W}(v) : v\in L^1( \Omega)\}\,<\,\infty
$$
and a minimising sequence $(u_k)_{k\in\mathbb{N} }\subset L^1( \Omega)$, thus $\alpha =\lim_{k\to\infty} \overline{W}(u_k)$.
Thanks to the definition of $\overline{W}$ we may achieve that even $(u_k)_{k\in\mathbb{N} }\subset \MDir$.
According to Theorem~\ref{thm:coinidence_relaxation} we have $\overline{W}(u_k)=W_0(u_k)$, hence 
$\alpha =\lim_{k\to\infty} {W_0}(u_k)$. 
Theorem \ref{thm:L1lowersemicontinuity} yields that for a subsequence $u_k\to u$ in $L^1(\Omega)$ and that $u\in BV( \Omega)\cap L^\infty (\Omega)$.
Due to the definition of $\overline{W}$ it follows that
$$
\overline{W}(u)\le \lim_{k\to\infty} {W_0}(u_k)=\alpha.
$$
The reverse inequality $\alpha \le \overline{W}(u)$ follows from the definition of $\alpha $ as an infimum.
To conclude we have $u\in BV (\Omega)\cap L^\infty (\Omega)$ and it satisfies
$$
\overline{W}(u)=\alpha =\inf\{\overline{W}(v) : v\in BV( \Omega)\}.
$$
\end{proof}

The preceding arguments show that the infimum in the definition of $\overline{W}$ is in fact a minimum and that
$$
\inf\{\overline{W}(v) : v\in L^1( \Omega)\}=\inf\{\overline{W}(v) : v\in \MDir\}=\inf\{ {W_0}(v) : v\in \MDir\}.
$$

\subsection{Navier boundary conditions}\label{sec:minimiser_Navier}

In what follows we always assume   $\Omega\subset\mathbb{R}^2$ to be  a bounded $C^3$-smooth domain
and fix a boundary datum $\varphi \in C^3(\overline{\Omega})$.

In order to model  the so called Navier boundary conditions
$$
u=\varphi \quad\mbox{\ and\ }\quad  H  =2\gamma \kappa_N \quad\mbox{\ on\ }\partial \Omega
$$
we consider the set
\begin{align}
	\MNav\,:=\,\{v\in H^2(\Omega):v-\varphi \in H^1_0(\Omega)\}. \label{eq:def-MNav}
\end{align}
As explained in the introduction one can formulate only the first Navier datum via a suitable subset of $H^2 (\Omega)$ while the 
second datum is only obtained via a minimising property and,
when compared with the Dirichlet setting, the larger set of admissible testing functions.

In contrast to Dirichlet boundary conditions the total Gau\ss\ curvature is not determined just by the Navier condition and is not constant on $\MNav$. 
Thus, we now consider the generalised Willmore functional $W_\gamma$ from \eqref{eq:def-Wgamma}. 

We  define as above the $L^1(\Omega)$-lower semicontinuous relaxation of the Willmore functional:
$$
\widehat{W}_\gamma :L^1(\Omega)\to [0,\infty],\quad \widehat{W}_\gamma(u):=\inf \{ \liminf_{k\to\infty} W_\gamma(u_k):  \MNav\ni u_k \to u 
\mbox{\ in\ } L^1 (\Omega)\}.
$$
We remark that again such approximating sequences always exist and that the set $\{\widehat{W}_\gamma <\infty\}$ will be strictly smaller than $L^1(\Omega)$.

Similarly to Theorem~\ref{thm:coinidence_relaxation} we also obtain for the Navier boundary problem that the relaxation of $W_\gamma$ 
coincides with the original functional in $\MNav$:

\begin{theorem}\label{thm:coinidence_relaxation_Navier}
 For  $u\in \MNav$ one has $\widehat{W}_\gamma(u)=W_\gamma(u)$.
\end{theorem}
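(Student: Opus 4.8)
The plan is to follow the same two-inequality scheme as in Theorem~\ref{thm:coinidence_relaxation}, but to replace the pure lower semicontinuity of the Willmore functional by a combination of lower semicontinuity of $W_0$ and \emph{continuity} of the total Gau\ss\ curvature. The inequality $\widehat{W}_\gamma(u)\le W_\gamma(u)$ is immediate: since $u\in\MNav$, the constant sequence $u_k\equiv u$ is admissible in the infimum defining $\widehat{W}_\gamma(u)$, whence $\widehat{W}_\gamma(u)\le\liminf_{k\to\infty}W_\gamma(u)=W_\gamma(u)$. The content is therefore the reverse inequality, for which I would take an arbitrary sequence $(u_k)\subset\MNav$ with $u_k\to u$ in $L^1(\Omega)$ and establish $W_\gamma(u)\le\liminf_{k\to\infty}W_\gamma(u_k)$; passing to the infimum over all such sequences then gives $W_\gamma(u)\le\widehat{W}_\gamma(u)$, and hence equality.

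For the reverse inequality I would argue as follows. If $L:=\liminf_{k\to\infty}W_\gamma(u_k)=\infty$ there is nothing to prove, so I may pass to a subsequence (not relabelled) along which $W_\gamma(u_k)\to L<\infty$. The relation \eqref{eq:rel_W0_Wgamma} then converts this bound into a uniform bound $\sup_k W_0(u_k)<\infty$, which is precisely the hypothesis needed in Proposition~\ref{prop:L1lowersemicontinuity}. The key observation is the splitting $W_\gamma=W_0-\gamma\int_\Omega KQ\,dx$, coming from \eqref{eq:def-Wgamma} and the definition of $W_0$. Along the chosen subsequence the standing assumptions of this subsection ($\Omega$ of class $C^3$, $\varphi\in C^3(\overline\Omega)$) together with $u_k,u\in H^2(\Omega)$ and $u_k-\varphi\in H^1_0(\Omega)$ let me apply Proposition~\ref{prop:L1lowersemicontinuity} to obtain $\int_\Omega K_kQ_k\,dx\to\int_\Omega KQ\,dx$, while Theorem~\ref{thm:L1lowersemicontinuity}, combined with $u\in H^2(\Omega)$ (so that $\Wnv_0(u)=W_0(u)$), yields $W_0(u)\le\liminf_{k\to\infty}W_0(u_k)$. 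Since the Gau\ss\ term converges, I may add it to the $\liminf$ without loss, giving
\begin{align*}
 L=\liminf_{k\to\infty}W_\gamma(u_k)=\liminf_{k\to\infty}W_0(u_k)-\gamma\int_\Omega KQ\,dx\ge W_0(u)-\gamma\int_\Omega KQ\,dx=W_\gamma(u),
\end{align*}
as desired.

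The main conceptual point is that $-\gamma\int_\Omega KQ\,dx$ is not lower semicontinuous on its own for either sign of $\gamma$, so $W_\gamma$ cannot be handled as a sum of two lower semicontinuous functionals. The resolution is exactly Proposition~\ref{prop:L1lowersemicontinuity}: on the Navier class the total Gau\ss\ curvature depends \emph{continuously} on energy-bounded $L^1$-convergent sequences, which permits pairing the genuine lower semicontinuity of the mean-curvature part (Theorem~\ref{thm:L1lowersemicontinuity}) with the exact limit of the Gau\ss\ part. The only technical care required is the mild bookkeeping of turning the assumed bound on $\liminf W_\gamma$ into the uniform $W_0$-bound demanded by Proposition~\ref{prop:L1lowersemicontinuity}, and the standard fact that a $\liminf$ is additive with respect to a convergent sequence; both are routine once \eqref{eq:rel_W0_Wgamma} is invoked.
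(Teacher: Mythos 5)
Your proposal is correct and follows essentially the same route as the paper: the easy inequality from the constant sequence, then for the reverse inequality the conversion of the $W_\gamma$-bound into a $W_0$-bound via \eqref{eq:rel_W0_Wgamma} (Lemma~\ref{lemma:Abounds}), lower semicontinuity of $W_0$ from Theorem~\ref{thm:L1lowersemicontinuity} together with $u\in H^2(\Omega)$, and continuity of the total Gau\ss\ curvature from Proposition~\ref{prop:L1lowersemicontinuity}. Your explicit passage to a subsequence realising the $\liminf$ is in fact slightly more careful bookkeeping than the paper's wording, but the argument is the same.
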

\begin{proof}
The inequality $W_\gamma(u)\ge \widehat{W}_\gamma(u)$ follows immediately from the definition.
To prove the opposite inequality take any sequence $(u_k)_{k\in \mathbb{N}}\subset\MNav $
with $u_k\to u $ in $L^1 (\Omega)$ and $\liminf_{k\to\infty} W_\gamma(u_k)<\infty$. By Lemma~\ref{lemma:Abounds} also $(W_0 (u_k) )_{k\in\mathbb{N}}$ 
is bounded. Therefore all properties shown in
Theorem~\ref{thm:L1lowersemicontinuity} hold and since $u\in H^2(\Omega)$ we deduce that  
$
	W_0(u) \,\le\, \liminf_{k\to\infty} W_0(u_k).
$
Since the total Gau\ss\ curvature is continuous by Proposition~\ref{prop:L1lowersemicontinuity} we therefore also obtain \begin{align*}
	W_\gamma(u) \,\le\, \liminf_{k\to\infty} W_\gamma(u_k),
\end{align*}
which implies $W_\gamma(u)\le \widehat{W}_\gamma(u)$.
\end{proof}

\begin{remark}\label{rem:W_gamma_vs_widehat_W}
As in the Dirichlet case we would like to characterise properties of the subset of $L^1(\Omega)$ where $\widehat{W}_\gamma$ is finite. 
The key difficulty here is to identify a suitable generalisation of the total Gau\ss\ curvature for a sufficiently large subclass of functions 
$u\in L^1(\Omega)\setminus H^2(\Omega)$. We consider here for $u\in BV(\Omega)$ with $\frac{\nabla^a u}{Q^a}\in H(\dive,\Omega)$ and $(Q^a)^{-1}\in BV(\Omega)$ 
\begin{align}\label{eq:def-Gauss-new}
   \Gauss(u) \,:=\, 2 \pi \chi(\Omega) + \int_{\partial \Omega}\frac{\nabla^a u}{Q^a}\cdot \nu \, \frac{\partial_\tau^2\varphi}{1+(\partial_\tau\varphi)^2 }\, ds
-\int_{\partial \Omega}\frac{\kappa }{Q^a}\, ds,
\end{align}
where $\tau,\kappa$ denote a unit tangent field and the scalar curvature (taken nonnegative for convex parts) of $\partial\Omega$, respectively, see Remark~\ref{rem:geod_curv}.

Since $\frac{\nabla^a u}{Q^a}\in H(\dive,\Omega)$ by \cite[Theorem I.1.2]{Temam} we have $\frac{\nabla^a u}{Q^a}\cdot\nu\in H^{-1/2}(\partial\Omega)$ 
and the first boundary integral, which more precisely has to be understood as a $H^{-1/2}(\partial\Omega)$-$H^{1/2}(\partial\Omega)$ duality product, is well-defined. Furthermore $(Q^a)^{-1}\in BV(\Omega)$ ensures by \cite[Theorem 5.3.1]{EvGa92} that the second boundary integral is well-defined.
Note also that by \cite[Theorem I.1.2]{Temam} 
\begin{align}\label{eq:def-Gauss}
	\Gauss(u) \,=\, 2 \pi \chi(\Omega) +  \int_{ \Omega} H^a(x)\alpha(x)\, dx +\int_{ \Omega}\frac{\nabla^a u}{Q^a}\cdot \nabla \alpha \, dx
-\int_{\partial \Omega}\frac{\kappa }{Q^a}\, ds,
\end{align}
where $H^a:=\nabla\cdot \frac{\nabla^a u}{Q^a}$ and where $\alpha\in C^1(\overline{\Omega})$ is any differentiable function satisfying 
$\alpha|_{\partial\Omega}=\frac{\partial_\tau^2\varphi}{1+(\partial_\tau\varphi)^2 }|_{\partial\Omega}$.

Our choice of the functional $\Gauss$ is motivated by Proposition~\ref{prop:L1lowersemicontinuity} and \eqref{eq:Gauss-v2}. In fact, the latter proposition
 shows that $\Gauss(u)$ coincides with $\int_\Omega KQ\,dx$ for $u\in H^2(\Omega)$ with boundary values $\varphi$. Moreover, for all $C>0$ the functional $\Gauss$ is 
continuous with respect to $L^1$-convergence in $H^2(\Omega)\cap \{W_0\leq C\}$. We do not claim that our choice of $\Gauss$ is a reasonable representation for 
arbitrary $u\in L^1(\Omega)\setminus H^2(\Omega)$. However, at least for minimising sequences in $\MNav$ we expect that limit points enjoy additional good 
properties such that their total Gau\ss\ curvature might already be described by $\Gauss$.

Using this modified total Gau\ss\ curvature we can define a generalised Willmore functional (or rather the non-singular part of the latter) as
\begin{eqnarray*}
 \Wnv_\gamma (u) &=& \Wnv_0 (u)+\gamma \Gauss(u)
\end{eqnarray*}
for $u\in BV(\Omega)$ with $\frac{\nabla^a u}{Q^a}\in H(\dive,\Omega)$ and $(Q^a)^{-1}\in BV(\Omega)$.
\end{remark}
The next proposition shows that for $u\in L^1(\Omega)$ with $\widehat{W}_\gamma(u)<\infty$ the functional $\Wnv_\gamma$ is well-defined and satisfies 
an upper estimate. In addition, at the non-vertical part of the boundary the first Navier boundary datum is attained.

\begin{proposition}\label{prop:Wgamma}
Consider $u\in L^1(\Omega)$ with $\widehat{W}_\gamma(u)<\infty$. Then $u\in BV(\Omega)\cap L^\infty(\Omega)$, $\frac{\nabla^a u}{Q^a}\in H(\dive,\Omega)$ and 
$(Q^a)^{-1}\in BV(\Omega)$ holds and we have
\begin{align}
	\Wnv_\gamma(u) \,\leq\, \widehat{W}_\gamma(u). \label{eq:liminf-Wgamma}
\end{align}
Moreover, $\Ha^{1}$-almost everywhere on $\{(Q^a)^{-1}>0\}\cap\partial\Omega$ the trace of $u$ on $\partial\Omega$ satisfies the first Navier boundary condition $u=\varphi$.
\end{proposition}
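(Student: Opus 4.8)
The plan is to run the compactness machinery of Theorem~\ref{thm:L1lowersemicontinuity} on an almost-optimal sequence, to supplement it with a new $BV$-bound for $Q_k^{-1}$, and finally to pass to the limit in the total Gauß curvature using the representation from Proposition~\ref{prop:L1lowersemicontinuity}. First I would fix a sequence $(u_k)_{k\in\N}\subset\MNav$ with $u_k\to u$ in $L^1(\Omega)$ and $W_\gamma(u_k)\to\widehat{W}_\gamma(u)<\infty$, and pass to a subsequence with $W_\gamma(u_k)\le M$. By \eqref{eq:rel_W0_Wgamma} in Lemma~\ref{lemma:Abounds} the energies $W_0(u_k)$ are then uniformly bounded as well, so that Theorem~\ref{thm:L1lowersemicontinuity} applies (note $u_k-\varphi\in H^1_0(\Omega)$): after a further subsequence one obtains $u\in BV(\Omega)\cap L^\infty(\Omega)$ with $\frac{\nabla^a u}{Q^a}\in H(\dive,\Omega)$, the convergences established in its proof for $q_k=Q_k^{-5/2}$, $v_k=q_k\nabla u_k$ and $H_k$, and the lower bound $\Wnv_0(u)\le\liminf_k W_0(u_k)$. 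This already gives $u\in BV\cap L^\infty$, the divergence regularity, and the $\Wnv_0$-part of the estimate.

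The first genuinely new point is $(Q^a)^{-1}\in BV(\Omega)$. Here I would introduce the auxiliary sequence $p_k:=Q_k^{-1}$, which satisfies $0\le p_k\le 1$ and, using $|\nabla u_k|\le Q_k$ so that $\partial_i p_k=O(|D^2u_k|/Q_k^{2})$,
\[
	\int_\Omega|\nabla p_k|\,dx\,\le\, C\int_\Omega\frac{|D^2u_k|}{Q_k^{2}}\,dx\,\le\, C\Big(\int_\Omega\frac{|D^2u_k|^2}{Q_k^{5}}\,dx\Big)^{1/2}\Big(\int_\Omega Q_k\,dx\Big)^{1/2}\,\le\, C
\]
by Cauchy--Schwarz together with \eqref{eq:3.5} and the area bound \eqref{eq:3.4}. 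Thus $(p_k)$ is bounded in $BV(\Omega)\cap L^\infty(\Omega)$; by $BV$-compactness a subsequence converges in $L^1(\Omega)$ to some $p\in BV(\Omega)$, and since $Q_k\to Q^a$ almost everywhere (by \eqref{eq:3.6}, \eqref{eq:3.8}, \eqref{eq:3.9}) one identifies $p=(Q^a)^{-1}$, giving $(Q^a)^{-1}\in BV(\Omega)$.

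Next I would prove $\int_\Omega K_kQ_k\,dx\to\Gauss(u)$. Starting from the representation \eqref{eq:3.12}, the bulk terms converge exactly as in the proof of Proposition~\ref{prop:L1lowersemicontinuity}: $\int_\Omega H_k\alpha\,dx\to\int_\Omega H^a\alpha\,dx$ since $H_k\rightharpoonup H^a$ in $L^2$, and $\int_\Omega\frac{\nabla u_k}{Q_k}\cdot\nabla\alpha\,dx\to\int_\Omega\frac{\nabla^a u}{Q^a}\cdot\nabla\alpha\,dx$ by dominated convergence, using $|\frac{\nabla u_k}{Q_k}|\le 1$ and \eqref{eq:3.8}, \eqref{eq:3.9}. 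The delicate term is the boundary integral $\int_{\partial\Omega}\kappa/Q_k\,ds$, since the $H^2$-trace argument of Proposition~\ref{prop:L1lowersemicontinuity} is no longer available for the limit $u$; I expect this to be the main obstacle. To handle it I would use that $g_k:=Q_k^{-3/2}$ is bounded in $H^1(\Omega)$ (again by \eqref{eq:3.4}, \eqref{eq:3.5}) and converges weakly to $g=(Q^a)^{-3/2}$; by compactness of the trace $H^1(\Omega)\to L^2(\partial\Omega)$ one gets $g_k\to g$ in $L^2(\partial\Omega)$ and hence $\Ha^1$-almost everywhere on $\partial\Omega$, so $Q_k^{-1}=g_k^{2/3}\to(Q^a)^{-1}$ $\Ha^1$-almost everywhere on $\partial\Omega$, and $0\le Q_k^{-1}\le 1$ lets dominated convergence give $\int_{\partial\Omega}\kappa/Q_k\,ds\to\int_{\partial\Omega}\kappa/Q^a\,ds$. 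Comparing \eqref{eq:3.12} with \eqref{eq:def-Gauss} then yields $\int_\Omega K_kQ_k\,dx\to\Gauss(u)$. Since $W_\gamma(u_k)=W_0(u_k)-\gamma\int_\Omega K_kQ_k\,dx$, passing to the limit shows $\lim_k W_0(u_k)=\widehat{W}_\gamma(u)+\gamma\Gauss(u)$, so the lower bound gives $\Wnv_0(u)\le\widehat{W}_\gamma(u)+\gamma\Gauss(u)$, which in view of the definition of $\Wnv_\gamma$ is precisely \eqref{eq:liminf-Wgamma}.

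Finally, for the attainment of $u=\varphi$ on the non-vertical part I would repeat the trace analysis from the proof of Proposition~\ref{prop:W_vs_overline_W}, but \emph{without} appealing to a second boundary condition. With $g_k=Q_k^{-3/2}$ and $e_k:=u_kQ_k^{-3/2}$ bounded in $H^1(\Omega)$ and converging weakly to $g=(Q^a)^{-3/2}$ and $e=ug$, their traces converge in $L^2(\partial\Omega)$; the first Navier datum $u_k=\varphi$ on $\partial\Omega$ gives $e_k=\varphi g_k$ on $\partial\Omega$, hence $e=\varphi g$ for the traces, while the Lebesgue-boundary-point argument leading to \eqref{eq:ug=e} gives $e=ug$ for the traces. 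Therefore $(u-\varphi)g=0$ holds $\Ha^1$-almost everywhere on $\partial\Omega$. Since $g=(Q^a)^{-3/2}$ and, arguing as for \eqref{eq:qg>0}, $\{g>0\}=\{(Q^a)^{-1}>0\}$ up to an $\Ha^1$-null subset of $\partial\Omega$, I conclude $u=\varphi$ $\Ha^1$-almost everywhere on $\{(Q^a)^{-1}>0\}\cap\partial\Omega$, as claimed.
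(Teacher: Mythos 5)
Your proposal is correct and follows essentially the same route as the paper: uniform $W_0$-bounds via Lemma~\ref{lemma:Abounds}, the compactness machinery of Theorem~\ref{thm:L1lowersemicontinuity}, the $W^{1,1}$-bound on $Q_k^{-1}$ (identical Cauchy--Schwarz estimate) for $(Q^a)^{-1}\in BV(\Omega)$, the representation \eqref{eq:3.12} combined with the $H^1\to L^2(\partial\Omega)$ trace convergence of $g_k=Q_k^{-3/2}$ for the Gau\ss\ curvature, and the $e_k=u_kg_k$ trace argument for the boundary condition. Your explicit bookkeeping $\Wnv_0(u)\le\widehat{W}_\gamma(u)+\gamma\Gauss(u)$ in the final step is consistent with $W_\gamma=W_0-\gamma\int_\Omega KQ\,dx$ and thus with $\Wnv_\gamma=\Wnv_0-\gamma\Gauss$, which is arguably more careful than the paper's statement of Remark~\ref{rem:W_gamma_vs_widehat_W}.
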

\begin{proof}
There exists a sequence $(u_k)_{k\in\mathbb{N} }\subset \MNav$
such that
\begin{align*}
	u_k\to u\quad\text{ in }L^1(\Omega),\qquad
	\widehat{W}_\gamma(u)=\lim_{k\to\infty} W_\gamma(u_k).
\end{align*} 
Thanks to Lemma~\ref{lemma:Abounds} and since $(W_\gamma (u_k) )_{k\in\mathbb{N}}$ is bounded also $(W_0 (u_k) )_{k\in\mathbb{N}}$ is bounded.
So, most arguments of the proofs of Theorem \ref{thm:L1lowersemicontinuity} and Proposition~\ref{prop:W_vs_overline_W} carry over, but the convergence of 
$\int_\Omega K_k Q_k\, dx$ and the attainment of the boundary condition need to be carefully discussed. 

As in the proofs of Theorem \ref{thm:L1lowersemicontinuity} and Proposition~\ref{prop:W_vs_overline_W} we obtain, after passing to a subsequence and recalling 
$g_k=Q_k^{-3/2}$, $e_k= u_k g_k$,
\begin{align}
	g_k\,&\rightharpoonup\, g,\quad e_k \rightharpoonup  e \quad&&\text{ in }H^1 (\Omega),\notag\\
	g_k\,&\to\, g,\quad e_k \to  e\quad
	&&\text{ in }\Omega\setminus E,\quad\capa_{3/2}(E)=0, \label{eq:conv-ge}\\
	\nabla u_k \,&\to\, \nabla^a u \quad&&\text{ a.e.~in }\Omega, \label{eq:conv-nablau-g}\\
	H_k \,&\rightharpoonup\, H^a=\nabla\cdot \frac{\nabla^au}{Q^a} \quad&&\text{ in }L^2(\Omega), \label{eq:conv-H-g}
\end{align}
and that $g=(Q^a)^{-{3}/{2}}$ and $e=ug$ holds almost everywhere in $\Omega$.
Moreover $Q^a\in L^1(\Omega)$, $\frac{\nabla^a u}{Q^a}\in H(\dive,\Omega)$ and
\begin{align}
	\Wnv_0(u) \,\le\, \liminf_{k\to\infty}W_0(u_k). \label{eq:lsc-willmore-g}
\end{align}
Since $0<(Q_k)^{-1}\leq 1$ for all $k\in\N$ and further
\begin{align*}
	\nabla (Q_k)^{-1} \,=\, -Q_k^{-3}D^2u_k\nabla u_k,\qquad \int_\Omega |\nabla (Q_k)^{-1}| \,\leq\, 
\Big(\int_\Omega Q_k^{-5}|D^2u_k|^2\Big)^{\frac{1}{2}}\Big(\int_\Omega Q_k\Big)^{\frac{1}{2}},
\end{align*}
$(Q_k)^{-1}$ is uniformly bounded in $W^{1,1}(\Omega)$. By the BV compactness theorem and since $(Q_k)^{-1}\to (Q^a)^{-1}$ in $L^1(\Omega)$ we deduce that 
$(Q^a)^{-1}\in BV(\Omega)$.

We next show the convergence of the total Gau\ss\ curvature. Here it is convenient to fix any $\alpha\in C^1(\overline\Omega)$ as above and to use the 
representation \eqref{eq:def-Gauss}. By \eqref{eq:conv-ge} we deduce that $g_k\to g$ in $L^2(\partial\Omega)$ and, possibly passing to a subsequence, 
$\Ha^1$-almost everywhere on $\partial\Omega$. Since $g_k,g$ are bounded we deduce that we also have 
\begin{align*}
	(Q_k)^{-1}\,=\, g_k^{{2}/{3}}\,\to\, g^{{2}/{3}}\,=\, (Q^a)^{-1}
\end{align*}
strongly in $L^1(\partial\Omega)$, where in the last equality we have used that $g^{{2}/{3}}\,=\, (Q^a)^{-1}$ in $BV(\Omega)$ and therefore in $L^1(\partial\Omega)$. 
Furthermore, from \eqref{eq:conv-nablau-g} and since $|\frac{\nabla u_k}{Q_k}|\leq 1$ we obtain 
\begin{align*}
	\frac{\nabla u_k}{Q_k} \,\to\, \frac{\nabla^a u}{Q^a}\quad\text{ in }L^1(\Omega).
\end{align*}
Equation (\ref{eq:3.12}), the convergence properties just derived, and \eqref{eq:conv-H-g} yield for $k\to\infty$
\begin{align*}
	\Gauss(u_k)\,&=\, 2 \pi \chi(\Omega) +  \int_{ \Omega} H_k(x)\alpha(x)\, dx +\int_{ \Omega}\frac{\nabla u_k}{Q_k}\cdot \nabla \alpha \, dx
-\int_{\partial \Omega}\frac{\kappa }{Q_k}\, ds\\
 \,&\to\, 2 \pi \chi(\Omega) +  \int_{ \Omega} H^a(x)\alpha(x)\, dx +\int_{ \Omega}\frac{\nabla^a u}{Q^a}\cdot \nabla \alpha \, dx
-\int_{\partial \Omega}\frac{\kappa}{Q^a}\, ds \,=\, \Gauss(u). \label{eq:conv-gauss}
\end{align*}
Recalling \eqref{eq:lsc-willmore-g} we conclude that \eqref{eq:liminf-Wgamma} holds.

Following the proof of \eqref{eq:ug=e} in Proposition~\ref{prop:W_vs_overline_W} we obtain $\Ha^1$-almost everywhere $e=u g$ on $\partial\Omega$.
Moreover we deduce from $(Q^a)^{-1}=g^{2/3}$ similarly as in \eqref{eq:qg>0} that
\begin{align*}
	\{g>0\}\cap (\partial\Omega\setminus B)\,=\, \{(Q^a)^{-1}>0\}\cap (\partial\Omega\setminus B) \quad\text{ for some }B\subset\partial\Omega \text{ with }\Ha^1(B)=0, 
\end{align*}
and further by the first Navier boundary condition that $e =\lim_{k\to\infty} e_k= \varphi g$ holds $\Ha^1$-almost everywhere on $\partial\Omega$. 
This implies that $u=\varphi$ is satisfied $\Ha^1$-almost everywhere on the set $\partial\Omega\cap \{(Q^a)^{-1}>0\}$.
\end{proof}

\begin{remark}\label{rem:Wgamma}
As before we obtain as a corrollary the existence of a minimiser for $\widehat{W}$, which is even bounded and has
finite surface area:
There exists a function $u\in BV(\Omega)\cap L^\infty (\Omega)$ such that
$$
\forall v\in L^1(\Omega):\quad \widehat{W}_\gamma(u)\le \widehat{W}_\gamma(v).
$$
The proof follows closely that of Theorem~\ref{thm:existence_minimiser}. To obtain the respective compactness property for the generalised Willmore functional 
$W_\gamma$ we in addition use that by Lemma~\ref{lemma:Abounds} a bound on $W_\gamma$ implies a bound for $W_0$.
We expect that the first Navier boundary data are not necessarily attained in a pointwise sense if vertical parts of the graph are present in the limit. Such a deviation will 
be charged by contributions to the energy from the singular part. In particular in such cases we expect that $W_\gamma^a(u) < \widehat{W}_\gamma(u)$. 
\end{remark}

\begin{remark}\label{rem:Helfrich_parameters}
Most of the results for the functional $W_\gamma$ also apply to more general Canham--Helfrich-type functionals \cite{Helfrich,Canh70} 
$$
W_{\alpha, H_0,\gamma}(u) =\alpha \int_{\Omega} \sqrt{1+ | \nabla u |^2} \, dx +  \frac{1}{4} \int_{\Omega} (H-H_0)^2 \; \sqrt{1+ | \nabla u |^2} \, dx 
- \gamma \int_{\Omega} K\; \sqrt{1+ | \nabla u |^2}\, dx.
$$
The physical meaningful range of parameter values is described by the conditions $\alpha\ge 0$, $0\le \gamma\le 1$, $\gamma  H_0^2\le 4\alpha(1-\gamma)$, 
see \cite{Helfrich,Nitsche}. These restrictions ensure pointwise nonnegativity of the whole integrand  $\alpha +\frac{1}{4}(H-H_0)^2-\gamma K$.

Here we can consider arbitrary fixed $\alpha>0$ and $H_0, \gamma$. For given $\varphi\in C^2(\overline{\Omega})$ we prescribe the boundary 
condition $u|_{\partial\Omega}=\varphi|_{\partial\Omega}$. Then the term $\gamma \int_{\Omega} K\; \sqrt{1+ | \nabla u |^2}\, dx$ is 
uniformly bounded by the data, see the proof of Lemma~\ref{lemma:Abounds}. Hence, bounds for $W_{\alpha, \gamma, H_0}$
 immediately yield bounds for the area and so for $W_0$. Diameter bounds follow directly by Theorem~\ref{thm:diam}.

In order to extend Proposition~\ref{prop:Wgamma} one observes that the area term is $L^1$-lower semicontinuous.
Moreover, the proof of Theorem~\ref{thm:L1lowersemicontinuity} yields that $(\sqrt{Q_k})_{k\in\mathbb{N}}$ is bounded 
in $L^2(\Omega)$ and $\sqrt{Q_k} \to \sqrt{Q^a}$ holds almost everywhere in $\Omega$. Vitali's theorem implies that $\sqrt{Q_k}\rightharpoonup \sqrt{Q^a}$
in $L^2(\Omega)$. We conclude further from \eqref{eq:3.10} that $(H_k-H_0)\sqrt{Q_k}\rightharpoonup (H^a-H_0)\sqrt{Q^a} $ in $L^2(\Omega)$. 
Hence the proof of Proposition~\ref{prop:Wgamma} can be extended to the Helfrich case. 

If we assume only $\alpha\ge 0$, but further that $\alpha\ge \varepsilon H_0^2$
for some $\varepsilon >0$, then bounds for $W_{\alpha, \gamma, H_0}$
imply bounds for $W_0$ that are uniform in $\alpha$. Diameter and area bounds follow by Theorem~\ref{thm:aprioribounds}. The corresponding results to 
Proposition~\ref{prop:Wgamma} can the be proved as indicated above. 
\end{remark}

%
%
\bigskip\noindent
{\bf Acknowledgement.} The authors are grateful to Gerhard Huisken for very  fruitful and helpful discussions and
suggestions.


\end{document}